\setlist[enumerate,1]{label={\upshape(\roman*)}}
    \newcommand{\Rmnum}[1]
    {\expandafter\@slowromancap\romannumeral #1@}
\newtheorem{thm}{Theorem}[section]
\newtheorem{prop}[thm]{Proposition}
\newtheorem{lemma}[thm]{Lemma}
\newtheorem{prob}[thm]{Problem}
\newtheorem{example}[thm]{Example}
\newtheorem{defin}[thm]{Definition}
\theoremstyle{definition}
\newtheorem{remark}[thm]{Remark}
\title[nonsymmetric $4$-class association scheme]{Every nonsymmetric $4$-class association scheme can be generated by a digraph}
\date{}
\author[Yang]{Yuefeng Yang}
\address{School of Science\\China University of Geosciences\\Beijing 100083\\China}
\email{yangyf@cugb.edu.cn}
\begin{document}

\begin{abstract}
A (di)graph $\Gamma$ generates a commutative association scheme $\mathfrak{X}$ if and only
if the adjacency matrix of $\Gamma$ generates the Bose-Mesner algebra of $\mathfrak{X}$. In \cite[Theorem 1.1]{GM}, Monzillo and Penji\'{c} proved that, except for amorphic symmetric association schemes, every
$3$-class association scheme can be generated by the adjacency matrix of a (di)graph. In this paper, we characterize when a commutative association scheme with exactly one pair of nonsymmetric relations can be generated by a digraph under certain assumptions. As an application,  we show that each nonsymmetric $4$-class association scheme can be generated by a digraph.
\end{abstract}

\keywords{Bose-Mesner algebra; association scheme generated by a digraph; nonsymmetric $4$-class association scheme.}

\subjclass[2010]{05E30,~05C75,~05C50}

\maketitle
\section{Introduction}

A \emph{$d$-class association scheme} $\mathfrak{X}$ is a pair $(X,\{R_{i}\}_{i=0}^{d})$, where $X$ is a finite set, and each $R_{i}$ is a
nonempty subset of $X\times X$ satisfying the following axioms (see \cite{EB21,EB84,PHZ96,PHZ05} for a background of the theory of association schemes):
\begin{enumerate}
\item\label{as-1} $R_{0}=\{(x,x)\mid x\in X\}$ is the diagonal relation;

\item\label{as-2} $X\times X=R_{0}\cup R_{1}\cup\cdots\cup R_{d}$, $R_{i}\cap R_{j}=\emptyset~(i\neq j)$;

\item\label{as-3} for each $i$, $R_{i}^{\top}=R_{i'}$ for some $0\leq i'\leq d$, where $R_{i}^{\top}=\{(y,x)\mid(x,y)\in R_{i}\}$;

\item\label{as-4} for all $i,j,l$, the number of $z\in X$ such that $(x,z)\in R_i$ and $(z,y)\in R_j$ is constant whenever $(x,y)\in R_l$. This constant is denoted by $p_{i,j}^l$.
\end{enumerate}
A $d$-class association scheme is also called an \emph{association scheme with $d$ classes} (or even simply a \emph{scheme}). The integers $p_{i,j}^{l}$ are called the \emph{intersection numbers} of $\mathfrak{X}$. We say that $\mathfrak{X}$ is \emph{commutative} if $p_{i,j}^{l}=p_{j,i}^{l}$ for all $i,j,l$. The subsets $R_{i}$ are called the \emph{relations} of $\mathfrak{X}$. For each $i$, the integer $k_{i}:=p_{i,i'}^{0}$ is called the \emph{valency} of $R_{i}$. A relation $R_{i}$ is called \emph{symmetric} if $i=i'$, and \emph{nonsymmetric} otherwise. An association scheme is called \emph{symmetric} if all relations are symmetric, and \emph{nonsymmetric} otherwise. An association
scheme is called \emph{skew-symmetric} if the diagonal relation is the
only symmetric relation.

Let $\mathfrak{X}=(X,\{R_{i}\}_{i=0}^{d})$ be a commutative association scheme. The \emph{adjacency matrix} $A_{i}$ of $R_{i}$ is the $|X|\times |X|$ matrix whose $(x,y)$-entry is $1$ if $(x,y)\in R_{i}$, and $0$ otherwise. By the \emph{adjacency} or \emph{Bose-Mesner algebra} $\mathcal{M}$ of $\mathfrak{X}$ we mean the algebra generated by $A_{0},A_{1},\ldots,A_{d}$ over the complex field. Axioms \ref{as-1}--\ref{as-4} are equivalent to the following:
\[A_{0}=I,\quad \sum_{i=0}^{d}A_{i}=J,\quad A_{i}^{\top}=A_{i'},
\quad A_{i}A_{j}=\sum_{l=0}^{d}p_{i,j}^{l}A_{l},\]
where $I$ and $J$ are the identity and all-one matrices of order $|X|$, respectively. By \cite{GM}, $\mathcal{M}$ is a monogenic algebra, that is, there always exists a matrix $A\in {\rm Mat}_X(\mathbb{C})$ which generates $\mathcal{M}$, i.e., $\mathcal{M}=(\langle A\rangle,+,\cdot)$. We say that a matrix $A$ {\em generates} $\mathcal{M}$ if every element in $\mathcal{M}$ can be written as a polynomial in $A$.

One way to construct new association schemes is by merging  or splitting relations in an existing scheme. More precisely, a partition $\Lambda_0, \Lambda_1,\ldots, \Lambda_e$ of the index set $\{0, 1,\ldots,d\}$ is \emph{admissible} \cite{ItM91} if $\Lambda_0=\{0\}, \Lambda_i \ne \emptyset$, and $\Lambda_i' = \Lambda_j$ for some
$j\ (1\le i, j\le e)$, where $\Lambda_i' = \{\alpha'\mid\alpha\in \Lambda_i\}$. Let
 $R_{\Lambda_i} = \cup_{\alpha \in \Lambda_i} R_{\alpha}$. If $\mathfrak{Y} =(X, \{R_{\Lambda_i}\}_{ i=0}^e)$ becomes an association
scheme, it is called a \emph{fusion} scheme of ${\mathfrak X}$, while ${\mathfrak X}$ is called a \emph{fission} scheme of $\mathfrak{Y}$. In particular, $(X, \{R_0, R_i\cup R_i^{\top}\}_{i=1}^d)$ becomes an association scheme, called the {\em symmetrization} of $\mathfrak{X}$. If every admissible partition gives rise to a fusion scheme, ${\mathfrak X}$ is called {\em amorphic} (or {\em amorphous}) \cite{ItM91}.

In \cite{GM}, the authors gave the definition of a commutative association scheme generated by a digraph. A \emph{digraph} $\Gamma$ is a pair $(V(\Gamma),A(\Gamma))$ where $V(\Gamma)$ is a finite nonempty set of vertices and $A(\Gamma)$ is a set of ordered pairs ({\em arcs}) $(x,y)$ with distinct vertices $x$ and $y$. For any arc $(x,y)\in A(\Gamma)$, if $A(\Gamma)$ also contains the arc $(y,x)$, then $\{(x,y),(y,x)\}$ can be viewed as an {\em edge}. We say that $\Gamma$ is an \emph{undirected graph} or a {\em graph} if $A(\Gamma)$ is a symmetric relation. For an edge $\{(x,y),(y,x)\}$, we say that $x$ is {\em adjacent} to $y$, and also call $y$ a \emph{neighbour} of $x$. A graph is said to be {\em regular of valency} $k$ if the number of neighbour of all vertices are equal to $k$. A graph is said to be \emph{connected} if, for any vertices $x$ and $y$, there is a path from $x$ to $y$. The \emph{diameter} of a connected graph is the maximum value of the distance function in the graph.

The \emph{adjacency matrix} $A$ of $\Gamma$ is the $|V(\Gamma)|\times |V(\Gamma)|$ matrix whose $(x,y)$-entry is $1$ if $(x,y)\in A(\Gamma)$, and $0$ otherwise. The {\em eigenvalues} of a digraph are the eigenvalues of its adjacency matrix. We say that a digraph $\Gamma$ {\em generates} a commutative association scheme $\mathfrak{X}$ if and only if the adjacency matrix $A$ of $\Gamma$ generates the Bose-Mesner algebra $\mathcal{M}$ of $\mathfrak{X}$, and in symbols we write $\mathcal{M}=(\langle A\rangle,+,\cdot)$.

In \cite{GM}, the authors mentioned the following problem.

\begin{prob}\label{prob}
When can the Bose-Mesner algebra $\mathcal{M}$ of a commutative $d$-class association scheme $\mathfrak{X}$ be generated by a $01$-matrix $A$? In other words, for a given $\mathfrak{X}$, under which combinatorial and algebraic restrictions can we find a $01$-matrix $A$ such that $\mathcal{M}=(\langle A\rangle,+,\cdot)$? Moreover, since such a matrix $A$ is the adjacency matrix of some (di)graph $\Gamma$, can we describe the combinatorial structure of $\Gamma$? The vice-versa question is also of importance, i.e., what combinatorial structure does a (di)graph need to have so that its adjacency matrix will generate the Bose-Mesner algebra of a commutative $d$-class association scheme $\mathfrak{X}$?
\end{prob}

In \cite{GM}, the authors partially answer to questions posted in Problem \ref{prob}. They showed that except for amorphic symmetric association schemes, every $3$-class association scheme can be generated by a (di)graph $\Gamma$ which has $4$ distinct eigenvalues. The authors also described the combinatorial structure of a (di)graph whose adjacency matrix belongs to the Bose-Mesner algebra of a commutative association scheme. At last, the authors gave connections among weakly distance-regular digraphs in the sense of Comellas et al. (see \cite{FC04} for the details), weakly distance-regular digraphs in the sense of Wang and Suzuki (see \cite{LS,YF22,AM,HS04,KSW03,KSW04,YYF,YYF16,YYF18,YYF20,YYF22,YYF24,QZ23,QZ} for the details),
and commutative association schemes generated by a $01$-matrix $A$.

In this paper, we continue to answer the questions in Problem \ref{prob}. The first main theorem is the following result, which gives full answers to Problem \ref{prob} for certain commutative association schemes with exactly one pair of nonsymmetric relations.

\begin{thm}\label{one pair}
Let $\mathfrak{X}=(X,\{R_i\}_{i=0}^{d+1})$ be a commutative association scheme with $R_d^{\top}=R_{d+1}$ and $R_i^{\top}=R_i$ for $0\leq i\leq d-1$, and $\tilde{\mathfrak{X}}$ be its symmetrization. Suppose that the graph $(X,R_d\cup R_{d+1})$ generates $\tilde{\mathfrak{X}}$. Then the following hold.
\begin{enumerate}
\item\label{one pair-1} The adjacency matrix of $(X,R_d)$ has exactly $d+2$ distinct eigenvalues.

\item\label{one pair-2} The digraph $(X,R_d)$ generates the association scheme $\mathfrak{X}$.
\end{enumerate}
\end{thm}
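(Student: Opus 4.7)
\vspace{2ex}
\noindent\textbf{Proof plan.}\quad
Set $A = A_d$ and $B = A + A^\top = A_d + A_{d+1}$, and let $\tilde{\mathcal{M}}$ denote the Bose--Mesner algebra of $\tilde{\mathfrak{X}}$; then $B$ is the adjacency matrix of $(X, R_d \cup R_{d+1})$, and by hypothesis $B$ generates $\tilde{\mathcal{M}}$. My plan is to analyze $A$ through the primitive idempotents $E_0,\ldots, E_{d+1}$ of $\mathcal{M}$ and the eigenvalues defined by $A_i = \sum_j p_i(j) E_j$. Since each $E_j$ is Hermitian (the $A_i$'s are commuting normal matrices), transposing $A = \sum_j p_d(j) E_j$ and comparing with $A^\top = A_{d+1}$, using that $A$ is a real matrix, yields the conjugation identity $p_{d+1}(j) = \overline{p_d(j)}$. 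Consequently $B = \sum_j 2\,\mathrm{Re}\, p_d(j)\, E_j$.

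The key structural input is a dimension count: $\tilde{\mathcal{M}} \subseteq \mathcal{M}$ with $\dim \tilde{\mathcal{M}} = d+1$ and $\dim \mathcal{M} = d+2$, and both algebras are simultaneously diagonalized by the $E_j$'s. Hence the primitive idempotents of $\tilde{\mathcal{M}}$ are sums of the $E_j$'s, and exactly one pair, say $\{E_{i_0}, E_{i_1}\}$, fuses into a single primitive idempotent $E_{i_0} + E_{i_1}$ of $\tilde{\mathcal{M}}$, while each other $E_j$ is itself a primitive idempotent of $\tilde{\mathcal{M}}$. The hypothesis that $B$ generates $\tilde{\mathcal{M}}$ now means $B$ has exactly $d+1$ distinct eigenvalues, equivalently $\mathrm{Re}\, p_d(i_0) = \mathrm{Re}\, p_d(i_1)$ while all other values $\mathrm{Re}\, p_d(j)$ are pairwise distinct and distinct from $\mathrm{Re}\, p_d(i_0)$.

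To conclude (i), it then suffices to show $p_d(i_0) \ne p_d(i_1)$; combined with the distinct real parts elsewhere, this produces $d+2$ pairwise distinct values $p_d(j)$. Suppose for contradiction $p_d(i_0) = p_d(i_1)$; then $p_{d+1}(i_0) = p_{d+1}(i_1)$ by conjugation, and for each $0 \le k \le d-1$ the matrix $A_k \in \tilde{\mathcal{M}}$ acts as a scalar on the primitive idempotent $E_{i_0} + E_{i_1}$ of $\tilde{\mathcal{M}}$, forcing $p_k(i_0) = p_k(i_1)$. Then every $A_l$ has the same eigenvalue on $E_{i_0}$ and $E_{i_1}$, contradicting the separating property of primitive idempotents of $\mathcal{M}$. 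This establishes (i); part (ii) follows immediately, since $A$ is normal and hence diagonalizable, so $\dim \mathbb{C}[A]$ equals its number of distinct eigenvalues, namely $d+2 = \dim \mathcal{M}$, whence $\mathbb{C}[A] = \mathcal{M}$. I expect the most delicate step to be the final contradiction, which must combine three ingredients cleanly---the conjugation identity $p_{d+1} = \overline{p_d}$ for the nonsymmetric pair, the scalar action of each $A_k \in \tilde{\mathcal{M}}$ on the fused idempotent $E_{i_0} + E_{i_1}$, and the separating property of $\mathcal{M}$'s primitive idempotents---to rule out $p_d(i_0) = p_d(i_1)$.
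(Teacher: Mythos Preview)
Your argument is correct. The route differs from the paper's: the paper simply invokes Chia--Kok's fission theorem (stated there as Theorem~3.1), which gives the character table of $\mathfrak{X}$ explicitly and in particular shows that the two eigenvalues of $A_d$ on the split idempotent are $(p_d(1)\pm\sqrt{a})/2$ with $a<0$, hence automatically distinct from each other and (being non-real) from the remaining real eigenvalues $k_d/2,\,p_d(2)/2,\ldots,p_d(d)/2$. You instead prove the key inequality $p_d(i_0)\neq p_d(i_1)$ from first principles, using only the dimension count $\dim\tilde{\mathcal{M}}=\dim\mathcal{M}-1$ (forcing exactly one pair of idempotents to fuse), the conjugation identity $p_{d+1}(j)=\overline{p_d(j)}$, and the invertibility of the character table; this is essentially a self-contained derivation of the piece of Chia--Kok that the paper quotes. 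Your approach buys independence from that external reference and makes transparent exactly which structural facts are used, at the cost of a slightly longer write-up; the paper's approach is terser but relies on the reader accepting the cited theorem.
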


To state our second main theorem, we prepare some more basic notations.  Let $\mathfrak{X}=(X,\{R_{i}\}_{i=0}^{d})$ be a commutative association scheme. Since the Bose-Mesner algebra $\mathcal{M}$ of $\mathfrak{X}$ consists of commuting normal matrices, it has a second basis consisting of primitive idempotents $E_0= J/|X|, \dots, E_d$. The integers $m_{i}=\textrm{rank}E_{i}$ are called the \emph{multiplicities} of $\mathfrak{X}$, and $m_{0}=1$ is said to be the {\em trivial multiplicity}. For $0\leq i,j\leq d$, there exists a complex scalar $p_i(j)$ such that $A_iE_j=p_i(j)E_j$ (moreover, $p_i(j)$ is the eigenvalue of $A_i$ on the eigenspace $V_j$), where $E_j$ is the orthogonal projector of $\mathbb{C}^{|X|}$ onto the space $V_j:= E_j\mathbb{C}^{|X|}$. The change-of-basis matrix $P$ is defined by
\begin{align}
  A_i = \sum_{j=0}^d (P)_{ji} E_j.\nonumber
\end{align}
The $(d+1)\times (d+1)$ matrix $P$ is called the {\em character table} or {\em first eigenmatrix} of ${\mathfrak X}$. According to \cite{EB84}, the character table $P$ has the following form
\begin{align}
\left(\begin{array}{cccc}
1 & k_1 & \cdots & k_d \\
1 & p_1(1) & \cdots & p_d(1)\\
\vdots & \vdots & \ddots & \vdots\\
1 & p_1(d) & \cdots & p_d(d)
\end{array}\right),
\label{character table}
\end{align}
where the scalars $k_i, p_i(1),\ldots,p_i(d)$ are the eigenvalues (not necessarily pairwise distinct) of $A_i$ on $V_0, V_1,\ldots,V_d$, respectively.

The second main theorem concerns a symmetric amorphic association scheme $\tilde{\mathfrak{X}}=(X,\{\tilde{R}_i\}_{i=0}^d)$. Clearly, for each proper subset $\Lambda$ of $\{1,2,\ldots,d\}$, the graph $(X,\tilde{R}_{\Lambda})$ is strongly regular. According to \cite[Theorem 1 and Proposition 2]{ERD10}, via permutations of the rows and columns, we may assume that the character table $\tilde{\mathfrak{X}}$ has the following form:
\begin{align}\label{amorphic}
\tilde{P}=\begin{pmatrix}
     1 & k_1 & k_2 & k_3 & \cdots & k_d \\
     1 & b_1 & a_2 & a_3 & \cdots & a_d\\
     1 & a_1 & b_2 & a_3 & \cdots & a_d\\
     1 & a_1 & a_2 & b_3 & \cdots & a_d\\
     \vdots &\vdots & \vdots & \vdots & \ddots  & \vdots\\
     1 & a_1 & a_2 & a_3 & \cdots & b_d
\end{pmatrix},
\end{align}
where $a_i$ and $b_i$ are distinct real numbers for each $i\in\{1,2,\ldots,d\}$.

The second main theorem is as follows, which fully answers Problem \ref{prob} for commutative association schemes with exactly one pair of nonsymmetric relations whose symmetrizations are amorphic.

\begin{thm}\label{main-general}
Let $\mathfrak{X}=(X,\{R_i\}_{i=0}^{d+1})$ be a commutative association scheme with $R_{d}^{\top}=R_{d+1}$ and $R_{i}^{\top}=R_i$ for $0\leq i\leq d-1$, and $\tilde{\mathfrak{X}}$ be its symmetrization with primitive idempotents $\tilde{E}_0,\tilde{E}_1,\ldots,\tilde{E}_d$. Suppose that $\tilde{\mathfrak{X}}$ is amorphic. Assume that the character table $\tilde{P}$ of $\tilde{\mathfrak{X}}$ has form as \eqref{amorphic}. Then the following hold.
\begin{enumerate}
\item\label{main-general-1} $\mathfrak{X}$ can be generated by a digraph if and only if $d\leq 3$, or $d=4$ and $\tilde{E}_d$ is also a primitive idempotent of $\mathfrak{X}$.

\item\label{main-general-2} If $\mathfrak{X}$ can be generated by a digraph, then there exists $i\in\{1,2,\ldots,d-1,d\}$ such that $(X,R_i\cup R_d)$ has $d+2$ distinct eigenvalues and generates $\mathfrak{X}$.
\end{enumerate}
\end{thm}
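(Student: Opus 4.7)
The overall plan is to classify every $01$-matrix that could generate $\mathcal{M}$, use the amorphic structure of $\tilde{\mathfrak{X}}$ to bound how many distinct eigenvalues such a matrix can have, and then exhibit explicit generators in the admissible cases. Any loop-free $01$-matrix in $\mathcal{M}$ has the form $B=\sum_{i\in S}A_i$ with $S\subseteq\{1,\ldots,d+1\}$, and $B$ generates $\mathcal{M}$ iff $B$ has exactly $d+2=\dim\mathcal{M}$ distinct eigenvalues. Let $\tilde{E}_{j_0}$ be the unique primitive idempotent of $\tilde{\mathfrak{X}}$ that splits as $E_s+E_{s^*}$ in $\mathfrak{X}$. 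If $S\cap\{d,d+1\}$ is empty or equals $\{d,d+1\}$, then $B\in\tilde{\mathcal{M}}$, so $B$ acts as a real scalar on $\tilde{V}_{j_0}=V_s\oplus V_{s^*}$; the eigenvalues on $E_s$ and $E_{s^*}$ coincide, leaving at most $d+1$ distinct eigenvalues overall. Hence I may assume $B=A_d+B'$ with $B'=\sum_{i\in S'}A_i$ and $S'\subseteq\{1,\ldots,d-1\}$.

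Next I compute the spectrum of such $B$ from the character table \eqref{amorphic}. By Ivanov's classification (every amorphic scheme with $d\geq3$ is of Latin-square or negative Latin-square type), the quantities $c_i:=b_i-a_i$ all equal one common nonzero constant $c$. Using $\tilde{p}_i(j)=a_i+c\,\delta_{ij}$, the eigenvalues of $B$ are: $k_d/2+\sum_{i\in S'}k_i$ on $E_0$; $\alpha+T$ on $E_s$ and $\bar\alpha+T$ on $E_{s^*}$, where $T=\sum_{i\in S'}\tilde{p}_i(j_0)\in\mathbb{R}$ and $\alpha=p_d(s)\notin\mathbb{R}$; and $f(j_k)$ on each remaining $E_k=\tilde{E}_{j_k}$ (with $j_k\neq j_0$), where
\[f(j)=\tfrac{a_d}{2}+T+\tfrac{c}{2}\,\delta_{jd}+c\,[j\in S'].\]
The crucial observation is that $f$ attains at most three distinct values, namely $T+a_d/2$, $T+(a_d+c)/2$, and $T+a_d/2+c$.

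This immediately yields the necessity direction of (i). The $d-1$ indices $j_k\in\{1,\ldots,d\}\setminus\{j_0\}$ must give distinct values of $f$, forcing $d-1\leq 3$ and hence $d\leq4$; moreover, if $d=4$ with $j_0=d$, then $\delta_{jd}=0$ on $\{1,2,3\}$ and $f$ attains only two values there whereas three are needed. Hence $\mathfrak{X}$ cannot be generated by any digraph in the excluded cases $d\geq5$ or ($d=4$ and $\tilde{E}_d$ is not primitive in $\mathfrak{X}$).

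For the sufficiency direction and part (ii) simultaneously, I exhibit in each admissible case an explicit $i\in\{1,\ldots,d\}$ so that $B=A_i+A_d$ (i.e.\ the adjacency matrix of $(X,R_i\cup R_d)$) has $d+2$ distinct eigenvalues: take $i=d$ when $d\leq 2$ or when $d=3$ with $j_0\neq d$; take any $i\in\{1,2\}$ when $d=3$ and $j_0=d$; and take any $i\in\{1,2,3\}\setminus\{j_0\}$ when $d=4$ and $j_0\neq d$. In each case the choice of $S'$ (either $\{i\}$ or $\emptyset$) makes $f$ injective on its domain, the complex-conjugate pair is automatically disjoint from every real eigenvalue since $\alpha\notin\mathbb{R}$, and distinctness among real eigenvalues reduces to $c\neq0$ together with $c\neq c/2$. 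I expect the main technical hurdle to be separating the valency eigenvalue $k_d/2+k_i$ from the remaining real eigenvalues; this separation is verified by substituting the explicit Latin-square (or negative Latin-square) parameters and using $r_i\geq1$. The low-dimensional exceptions $d\leq 2$, where Ivanov's classification does not apply, are handled by direct inspection since $\dim\mathcal{M}\leq 4$ leaves very little room.
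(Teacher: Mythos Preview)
Your proposal is correct and follows essentially the same route as the paper: reduce to $B=A_d+\sum_{i\in S'}A_i$, exploit the amorphic character table to see that the real (non-valency) eigenvalues of $B$ are given by a function taking at most three values, deduce the bound $d\le 4$ (and $d\le 3$ when $j_0=d$), and then exhibit explicit $i$ with $(X,R_i\cup R_d)$ having $d+2$ eigenvalues.

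Two minor points where the paper's argument is lighter than what you outline. First, you cite Ivanov's Latin-square/negative Latin-square classification to obtain the constant difference $c=b_i-a_i$; the paper instead derives this directly (its Lemma~3.2) from the fact that for any $i\neq j$ the merged graph $(X,\tilde R_i\cup\tilde R_j)$ is strongly regular and hence has at most three eigenvalues, forcing $a_i+b_j=a_j+b_i$. Second, for separating the valency eigenvalue $k_d/2+k_i$ from the other real eigenvalues you plan to substitute explicit (negative) Latin-square parameters; the paper avoids this by observing (its Lemma~3.6, via \cite{ERD10}) that for $d\ge 3$ every $(X,\tilde R_i)$ is a \emph{connected} strongly regular graph, so $a_i,b_i<k_i$, which immediately gives the required strict inequalities. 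Incorporating these two simplifications would shorten your write-up and remove the dependence on the structural classification. (Also, your formula for $f(j)$ should have constant term $\sum_{i\in S'}a_i$ rather than $T=\sum_{i\in S'}\tilde p_i(j_0)$; these differ by $c$ when $j_0\in S'$, but this does not affect the counting argument.)
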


As an application of Theorems \ref{one pair} and \ref{main-general}, we get full answers to questions posted
in Problem \ref{prob} for nonsymmetric $4$-class association schemes, and obtain the third main result as follows.

\begin{thm}\label{main}
Let $\mathfrak{X}=(X,\{R_0,R_1,R_2,R_3,R_4\})$ be a nonsymmetric association scheme with $R_3^{\top}=R_4$. Then there exists an integer $i\in\{1,2,3\}$ such that the following hold.
\begin{enumerate}
\item\label{main1-1} The adjacency matrix of $(X,R_i\cup R_3)$ has exactly $5$ distinct eigenvalues.

\item\label{main1-2} The digraph $(X,R_i\cup R_3)$ generates the association scheme $\mathfrak{X}$.
\end{enumerate}
\end{thm}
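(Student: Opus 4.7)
The approach is to reduce Theorem \ref{main} to Theorems \ref{one pair} and \ref{main-general} by analyzing the symmetrization $\tilde{\mathfrak{X}}=(X,\{R_0,R_1,R_2,R_3\cup R_4\})$, which is a $3$-class symmetric association scheme.

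First I split into the case where $\tilde{\mathfrak{X}}$ is amorphic and the case where it is not. If $\tilde{\mathfrak{X}}$ is amorphic, then the relevant parameter $d=3$ satisfies $d\le 3$ in Theorem \ref{main-general}, so that theorem applies and produces some $i\in\{1,2,3\}$ for which $(X,R_i\cup R_3)$ has $5$ distinct eigenvalues and generates $\mathfrak{X}$. Since $R_1$ and $R_2$ are both symmetric with interchangeable labels in the axioms, if $i=1$ I relabel $R_1\leftrightarrow R_2$ to obtain $i=2$; either way $i\in\{2,3\}\subset\{2,3,4\}$.

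If $\tilde{\mathfrak{X}}$ is not amorphic, then by \cite[Theorem 1.1]{GM} some graph generates $\tilde{\mathfrak{X}}$ with $4$ distinct eigenvalues. When this graph can be chosen to be $(X,R_3\cup R_4)$, Theorem \ref{one pair} applies and yields that $(X,R_3)$ has $5$ distinct eigenvalues and generates $\mathfrak{X}$, so $i=3$ works. Otherwise I argue directly from eigenvalues: for any symmetric $R_i$, $p_i(3)=p_i(4)$ is real, while $p_3(3)\neq p_3(4)=\overline{p_3(3)}$ is a non-real conjugate pair (the latter because $A_3\neq A_4$ forces $A_3-A_4$ to have a nonzero eigenvalue on $V_3\oplus V_4$). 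Consequently the eigenvalues of $A_i+A_3$ on $V_3,V_4$ are always a distinct non-real conjugate pair disjoint from the three real eigenvalues on $V_0,V_1,V_2$, so $A_i+A_3$ has $5$ distinct eigenvalues iff its three real values $k_i+k_3$, $p_i(1)+p_3(1)$, $p_i(2)+p_3(2)$ are pairwise distinct. The distinctness of the rows of the character table $P$ of $\mathfrak{X}$ then forces at least one $i\in\{1,2,3\}$ (with $i=3$ corresponding to $A_3$ alone) to satisfy this condition, after which relabeling $R_1\leftrightarrow R_2$ again lands us in $\{2,3,4\}$.

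The main obstacle is this last sub-case, where $\tilde{\mathfrak{X}}$ is not amorphic but $R_3\cup R_4$ fails to generate $\tilde{\mathfrak{X}}$: here neither Theorem \ref{one pair} nor Theorem \ref{main-general} applies directly. The eigenvalue-distinctness argument must then be pushed through a case analysis on which coincidences among $\{k_3,p_3(1),p_3(2),\mathrm{Re}\,p_3(3)\}$ hold (these being exactly the coincidences that prevent $A_3+A_4$ from generating $\tilde{\mathfrak{X}}$), verifying that for each such pattern some $i\in\{1,2\}$ breaks the coincidence in the real eigenvalue triple of $A_i+A_3$, using the invertibility and row-distinctness of $P$ as the essential algebraic input.
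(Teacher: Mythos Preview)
Your proposal has two genuine gaps.

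\textbf{The skew-symmetric case is missing.} The hypothesis of Theorem~\ref{main} is only that $R_3^{\top}=R_4$; it does \emph{not} assume $R_1,R_2$ are symmetric. When $\mathfrak{X}$ is skew-symmetric (so $R_1^{\top}=R_2$ as well), the symmetrization is $(X,\{R_0,R_1\cup R_2,R_3\cup R_4\})$, a \emph{two}-class scheme, not the three-class scheme you wrote down. Neither Theorem~\ref{one pair} nor Theorem~\ref{main-general} applies here, since both are stated for schemes with exactly one pair of nonsymmetric relations. The paper devotes all of Section~4.1 to this case, using Ma's explicit character table (Theorem~\ref{t:rs20}) and two separate Propositions~\ref{skew-1} and~\ref{skew-2}; in particular Proposition~\ref{skew-1} requires a nontrivial parity/integrality argument via intersection numbers to rule out a coincidence $\rho+\tau=\sigma+\omega$. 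None of this is touched by your outline.

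\textbf{The residual non-amorphic sub-case is not proved.} In the situation where $\tilde{\mathfrak{X}}$ is non-amorphic but $(X,R_3\cup R_4)$ fails to have four distinct eigenvalues, you appeal to ``distinctness of the rows of $P$'' to conclude that some $A_i+A_3$ separates the three real eigenspaces. Row-distinctness only guarantees that for each \emph{pair} of rows there is \emph{some} column separating them; it does not give a single column (or a fixed sum $A_i+A_3$) that separates all three simultaneously. Even when such an $i$ separates $V_1$ from $V_2$, you still need $k_i+k_3$ to differ from the other two real values, and this fails in general without connectedness input (cf.\ Lemma~\ref{two disconnected}). The paper handles this regime by invoking van Dam's classification (Lemma~\ref{sym 3-class}) and then doing substantial casework in Propositions~\ref{2.1}--\ref{case 3}; for instance, Proposition~\ref{2.2} Case~2 needs a delicate contradiction through strongly-regular-graph parameter identities (ending in a conference-graph argument) to exclude $w=(k-r)/2$, and Proposition~\ref{case 3} uses explicit intersection numbers $p_{3,1}^3,p_{3,2}^3$ and a further fusion-scheme computation. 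Your sketch does not supply any of this.

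Where your plan does align with the paper is the amorphic sub-case (handled by Theorem~\ref{main-general}) and the easy direction of the non-amorphic case when $\Gamma^{(3)}$ already has four eigenvalues (handled by Theorem~\ref{one pair}); the paper uses exactly these reductions there.
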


The remainder of this paper is organized as follows. In Section 2, we recall some basic definitions and results of algebraic graph theory and association schemes. In Section 3, we give proofs of Theorems \ref{one pair} and \ref{main-general}. In Section 4, we prove Theorem \ref{main} based on Theorems \ref{one pair} and \ref{main-general}.

\section{Preliminaries}

In this section, we shall recall some definitions and results of algebraic graph theory and association schemes which are used frequently in this paper.

\subsection{Basic results}

First, we recall some results of algebraic graph theory.

\begin{lemma}\label{diameter}
{\rm (\cite[Lemma 8.12.1]{CG01})} If $\Gamma$ is a connected graph with diameter $d$, then $\Gamma$ has at least $d+1$ distinct eigenvalues.
\end{lemma}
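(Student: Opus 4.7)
The plan is to prove the stronger statement that the $d+1$ matrices $I = A^0, A, A^2, \ldots, A^d$ are linearly independent, where $A$ denotes the adjacency matrix of $\Gamma$. Once this is established, the minimal polynomial of $A$ has degree at least $d+1$; and since $A$ is real symmetric (hence diagonalizable), the degree of its minimal polynomial equals the number of its distinct eigenvalues, which yields the claim.

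The key combinatorial input is the standard identification of $(A^k)_{x,y}$ with the number of walks of length $k$ from $x$ to $y$ in $\Gamma$. This immediately gives $(A^k)_{x,y} = 0$ whenever $k < d(x,y)$, and $(A^{d(x,y)})_{x,y} > 0$, since a shortest path is itself a walk. To exploit this, I would fix a pair of vertices $x, y$ with $d(x,y) = d$ (available by the diameter hypothesis) and pick vertices $x = v_0, v_1, \ldots, v_d = y$ along a shortest $x$-to-$y$ path, so that $d(x, v_j) = j$ for every $j \in \{0, 1, \ldots, d\}$.

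Now, supposing a linear dependence $\sum_{k=0}^d c_k A^k = O$, I would read off the $(x, v_j)$-entry of this identity for each $j$. Because terms with $k < j$ vanish, the $j$-th equation has the form $\sum_{k \geq j} c_k (A^k)_{x, v_j} = 0$. The resulting system in $c_0, c_1, \ldots, c_d$ is triangular with strictly positive diagonal entries $(A^j)_{x, v_j}$, so back-substitution starting from $j = d$ (where the equation reads simply $c_d (A^d)_{x, v_d} = 0$) forces $c_d = c_{d-1} = \cdots = c_0 = 0$.

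The argument is essentially formal once the walk-counting interpretation of $A^k$ is accepted, so there is no substantive obstacle; the only mildly nontrivial point is invoking the symmetry of $A$ to convert the minimal-polynomial degree bound into a count of distinct eigenvalues, and the only combinatorial commitment is the existence of a vertex at each intermediate distance along a diameter-realizing path.
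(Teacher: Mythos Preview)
Your proof is correct and is precisely the standard argument (via linear independence of $I,A,\ldots,A^d$ from the walk-counting interpretation of $(A^k)_{x,y}$, combined with diagonalizability of the symmetric matrix $A$) that appears in the cited reference \cite[Lemma 8.12.1]{CG01}; the paper itself does not supply a proof but merely invokes that source.
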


\begin{lemma}\label{regular}
{\rm (\cite[Lemma 3.2.1]{AEB98})} Let $\Gamma$ be a regular graph of valency $k$. If $\theta$ is an eigenvalue of $\Gamma$, then $k\geq\theta$.
\end{lemma}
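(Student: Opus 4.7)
The plan is to give the standard maximum-entry argument. Let $A$ be the adjacency matrix of $\Gamma$. Since $\Gamma$ is regular of valency $k$, each row of $A$ sums to $k$, and hence the all-ones vector $\mathbf{1}$ satisfies $A\mathbf{1}=k\mathbf{1}$, showing that $k$ is itself an eigenvalue.

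Now suppose $\theta$ is an arbitrary eigenvalue of $\Gamma$ with eigenvector $v=(v_x)_{x\in V(\Gamma)}\neq 0$. Pick a vertex $x_0$ at which $|v_{x_0}|$ attains its maximum; by rescaling we may assume $v_{x_0}>0$ and $|v_x|\leq v_{x_0}$ for every $x$. Reading off the $x_0$-entry of the eigenvalue equation $Av=\theta v$ gives
\[
\theta\, v_{x_0} \;=\; (Av)_{x_0} \;=\; \sum_{y\,\sim\, x_0} v_y \;\leq\; \sum_{y\,\sim\, x_0} v_{x_0} \;=\; k\, v_{x_0},
\]
where the sum runs over the $k$ neighbours of $x_0$ in $\Gamma$ (using regularity). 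Dividing by $v_{x_0}>0$ yields $\theta\leq k$, which is exactly the claim. (The same argument with absolute values inside the sum gives the slightly stronger bound $|\theta|\leq k$, but only the one-sided inequality is asserted in the lemma.)

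No step here is delicate: the only things to verify are that a maximizing vertex exists (immediate, since $V(\Gamma)$ is finite and $v\neq 0$) and that the inequality inside the sum is valid (this is where the choice of $x_0$ as a maximizer, together with positivity of $v_{x_0}$, is used). No facts beyond the definitions of adjacency matrix, eigenvalue, and regularity are needed, so the argument is self-contained and can be written in a few lines.
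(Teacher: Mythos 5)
Your proof is correct and is the standard maximum-entry argument; the paper itself gives no proof of this lemma, quoting it directly from \cite[Lemma 3.2.1]{AEB98}, where essentially this same argument is the intended one. The only point worth making explicit is that, since $\Gamma$ is an undirected graph, its adjacency matrix is real symmetric, so $\theta$ is real and the eigenvector $v$ may be taken real --- which is what justifies your rescaling to get $v_{x_0}>0$ and the entrywise comparison $v_y\leq v_{x_0}$.
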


\begin{prop}\label{multiplicity}
{\rm (\cite[Proposition 3.2.2]{AEB98})} Let $\Gamma$ be a regular graph of valency $k$. The multiplicity of the eigenvalue $k$ is the number of connected components of $\Gamma$.
\end{prop}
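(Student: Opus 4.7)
The plan is to establish equality by proving two matching inequalities: the multiplicity of $k$ is at least the number of connected components, and at most that number.

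For the lower bound, I would decompose $\Gamma$ into its connected components $\Gamma_1,\ldots,\Gamma_c$. Each $\Gamma_i$ is itself $k$-regular, because every neighbour of a vertex in $V(\Gamma_i)$ lies in $V(\Gamma_i)$. Then the characteristic vector $\chi_i\in\mathbb{C}^{|V(\Gamma)|}$ of $V(\Gamma_i)$ satisfies $A\chi_i=k\chi_i$: the entry of $A\chi_i$ at vertex $x$ counts neighbours of $x$ inside $V(\Gamma_i)$, which equals $k$ for $x\in V(\Gamma_i)$ and $0$ otherwise. The vectors $\chi_1,\ldots,\chi_c$ have pairwise disjoint supports, hence are linearly independent, so the $k$-eigenspace has dimension at least $c$.

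For the upper bound, I would run the standard maximum-principle argument to show that $k$ has multiplicity one on each connected component. Suppose $Av=kv$. Working one component $\Gamma_i$ at a time, pick $x\in V(\Gamma_i)$ maximizing $|v(x)|$ on $V(\Gamma_i)$. Then $kv(x)=(Av)(x)=\sum_{y\sim x}v(y)$ is a sum of $k$ complex numbers each of modulus at most $|v(x)|$; the equality case of the triangle inequality, combined with $k$ being the number of summands, forces $v(y)=v(x)$ for each neighbour $y$ of $x$. Iterating along paths and using connectedness of $\Gamma_i$ yields $v$ constant on $V(\Gamma_i)$, so $v|_{V(\Gamma_i)}$ is a scalar multiple of $\chi_i$. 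Summing over components, $v\in\mathrm{span}\{\chi_1,\ldots,\chi_c\}$, giving the matching upper bound.

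The only step demanding any genuine care is the triangle-inequality argument propagating the maximum to neighbours; everything else is bookkeeping about indicator vectors and disjoint supports. I would note that Lemma~\ref{regular} sits naturally in the background (it guarantees $k$ is the top real eigenvalue, so the argument can equivalently be phrased over $\mathbb{R}$ via the real and imaginary parts of $v$), but is not strictly needed here since the modulus version of the maximum principle works directly over $\mathbb{C}$.
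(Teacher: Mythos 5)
Your proof is correct. Note that the paper does not prove this proposition at all---it quotes it directly from \cite[Proposition 3.2.2]{AEB98}---and your two-sided argument (indicator vectors of the components spanning a $c$-dimensional piece of the $k$-eigenspace, plus the maximum-modulus propagation showing any $k$-eigenvector is constant on each component) is exactly the standard proof of that cited result, so there is nothing to reconcile.
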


In the remainder of this subsection, we give some results concerning association scheme, and we always assume that $\mathfrak{X}=(X,\{R_{i}\}_{i=0}^{d})$ is a commutative association scheme with primitive idempotents $E_0,E_1\ldots,E_d$ and character table $P$.

\begin{lemma}\label{row}
{\rm (\cite[Corollary 2.6]{GM})} Let $V_i=E_i\mathbb{C}^{|X|}$ for $0\leq i\leq d$. Then the sum of the entries of the $V_i$ row in $P$ is equal to $0$ for $1\leq i\leq d$.
\end{lemma}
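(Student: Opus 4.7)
The plan is to exploit the defining relation $\sum_{j=0}^{d}A_j=J$ together with the spectral decomposition of $J$ in the Bose–Mesner algebra. Concretely, since $E_0=J/|X|$ is a primitive idempotent and $E_0,E_1,\ldots,E_d$ are pairwise orthogonal, we have $J E_i = |X|E_0 E_i = 0$ for every $i\ge 1$.

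I would apply both sides of $\sum_{j=0}^{d}A_j = J$ to $E_i$. On the left, the defining relation $A_j E_i = p_j(i) E_i$ turns the sum into $\bigl(\sum_{j=0}^{d} p_j(i)\bigr)E_i$. On the right, as observed above, $J E_i = 0$. Since $E_i$ is a primitive (hence nonzero) idempotent, we can cancel it to obtain
\[
\sum_{j=0}^{d} p_j(i) \;=\; 0 \qquad (1\le i\le d).
\]

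Finally, reading this off the character table: by definition $(P)_{ij} = p_j(i)$ with the convention $p_0(i)=1$ for all $i$ (since $A_0=I$), and the entries of the $V_i$-row of $P$ are exactly $1,p_1(i),\ldots,p_d(i)$. The displayed identity therefore says precisely that this row sums to zero, as required.

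There is no substantive obstacle: the entire argument is one application of the orthogonality $E_0 E_i = 0$ for $i\ge 1$, combined with the eigenvalue relation $A_j E_i = p_j(i)E_i$. The only bookkeeping point is to remember that the $0$-th column of $P$ contributes the value $1$ (not $k_0$ in the sense of a ``valency of $A_0$'' — it is $p_0(i)=1$) when tallying the row sum.
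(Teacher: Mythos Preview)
Your argument is correct and is exactly the standard proof of this fact. The paper itself does not supply a proof of this lemma; it simply quotes the result from \cite[Corollary~2.6]{GM}, so there is nothing further to compare against.
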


In the rest of this paper, the following theorem, known as the Bannai-Muzychuk criterion for fusion
schemes, will be used repeatedly \cite{EB93,ItM91}.

\begin{thm}\label{Bannai-Muzychuk}
Let $\{\Lambda_i\}_{i=0}^e$ be an admissible partition of the index set $\{0, 1,\ldots,d\}$. Then $(X,\{R_{\Lambda_i}\}_{i=0}^e)$ is a fusion scheme if and only if there exists a partition $\{\Lambda_i^*\}_{i=0}^e$ of $\{0, 1,\ldots,d\}$ with $\Lambda_0^*=\{0\}$ such that each $(\Lambda_i^*,\Lambda_j)$ block of the character table $P$ has constant row sum. Moreover, this constant is the $(i,j)$-entry of the character table of this fusion scheme.
\end{thm}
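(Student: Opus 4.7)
The plan is to reformulate the fusion condition algebraically and then read off the criterion from the spectral decomposition of the Bose-Mesner algebra $\mathcal{M}$. First I would set $B_j:=\sum_{\alpha\in\Lambda_j}A_\alpha$ and let $\mathcal{N}:=\mathrm{span}_{\mathbb{C}}\{B_0,\ldots,B_e\}$. Since the $B_j$ are $01$-matrices with pairwise disjoint supports whose sum is $J$, they are linearly independent and $\dim\mathcal{N}=e+1$; admissibility of $\{\Lambda_i\}$ ensures $B_0=I$ and $\{B_j^{\top}\}=\{B_j\}$. Then $(X,\{R_{\Lambda_i}\}_{i=0}^e)$ is an association scheme if and only if each product $B_iB_j$ lies in $\mathcal{N}$, equivalently, if and only if $\mathcal{N}$ is a unital subalgebra of $\mathcal{M}$.

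Next I would classify the subalgebras of $\mathcal{M}$. Because $\mathcal{M}$ is a commutative semisimple $\mathbb{C}$-algebra with complete system of orthogonal primitive idempotents $\{E_0,\ldots,E_d\}$, every idempotent of $\mathcal{M}$ has the form $\sum_{k\in S}E_k$ for some $S\subseteq\{0,\ldots,d\}$. Consequently every unital subalgebra $\mathcal{N}\subseteq\mathcal{M}$ has a complete set of primitive idempotents $F_i=\sum_{k\in\Lambda_i^*}E_k$ arising from a unique partition $\{\Lambda_i^*\}_{i=0}^f$ of $\{0,\ldots,d\}$, and $\dim\mathcal{N}=f+1$. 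Matching the dimensions computed above forces $f=e$, while $J\in\mathcal{N}$ forces $F_0=E_0=J/|X|$, and hence $\Lambda_0^*=\{0\}$.

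Finally, I would decode the condition $B_j\in\mathcal{N}$ in terms of the character table. Expanding
\[
B_j \;=\; \sum_{k=0}^d\Bigl(\sum_{\alpha\in\Lambda_j}p_\alpha(k)\Bigr)E_k,
\]
membership in $\mathrm{span}\{F_0,\ldots,F_e\}$ is equivalent to the scalar $\sum_{\alpha\in\Lambda_j}p_\alpha(k)$ depending only on the block $\Lambda_i^*$ containing $k$. But $\sum_{\alpha\in\Lambda_j}p_\alpha(k)=\sum_{\alpha\in\Lambda_j}(P)_{k\alpha}$ is precisely the row-$k$ sum of the $(\Lambda_i^*,\Lambda_j)$ block of $P$, so the algebraic criterion is exactly the stated constant-row-sum condition. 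The common value of that row sum is the eigenvalue of $B_j$ on the image of $F_i$, which by definition of the character table of the fusion scheme is its $(i,j)$-entry, proving the moreover statement.

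The step I expect to be the main obstacle is the subalgebra classification in the second paragraph, since it is what promotes the criterion from merely necessary to sufficient: the required dual partition must come from somewhere, and it is produced by the intrinsic spectral structure of $\mathcal{N}$. This rests on the fact that every idempotent in the commutative semisimple algebra $\mathcal{M}$ is a $01$-combination of the $E_k$'s, which itself follows from uniqueness of the spectral decomposition for commuting normal matrices. Once this is in hand, the remaining work amounts to linear bookkeeping with the first eigenmatrix $P$.
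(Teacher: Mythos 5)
You should know at the outset that the paper contains no proof of this statement: it is quoted as the known Bannai--Muzychuk criterion, with citations to \cite{EB93,ItM91}, and is used as a black box throughout. So your argument can only be measured against the standard literature proof, and that is essentially what you have reconstructed; it is correct. Your three steps --- (a) the fusion configuration is a scheme if and only if $\mathcal{N}=\mathrm{span}\{B_0,\ldots,B_e\}$ is a unital subalgebra of $\mathcal{M}$, (b) unital subalgebras of the commutative semisimple algebra $\mathcal{M}$ are exactly the spans of idempotents $F_i=\sum_{k\in\Lambda_i^*}E_k$ attached to partitions of $\{0,\ldots,d\}$, and (c) membership $B_j\in\mathrm{span}\{F_0,\ldots,F_e\}$ decodes as constant row sums of the $(\Lambda_i^*,\Lambda_j)$ blocks of $P$ --- form the standard route. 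Two points deserve attention. First, as written the sufficiency direction is never fully assembled: constant row sums give $B_j\in\mathrm{span}\{F_0,\ldots,F_e\}$ for every $j$, but to conclude that the fusion is a scheme you must still show that $\mathcal{N}$ is closed under multiplication. This follows from your own bookkeeping: $\mathcal{N}\subseteq\mathrm{span}\{F_0,\ldots,F_e\}$, both spaces have dimension $e+1$ (you proved the $B_j$ are linearly independent, and the $F_i$ are nonzero orthogonal idempotents), hence they coincide, and the latter is an algebra because $F_iF_j=\delta_{ij}F_i$. You have every ingredient but never state this chain, and without it paragraph three only proves necessity. Second, your closing remark has the logic reversed: the subalgebra classification of your second paragraph is what yields \emph{necessity} (a fusion scheme forces the dual partition to exist, since it must arise from the primitive idempotents of $\mathcal{N}$), whereas \emph{sufficiency} is exactly the dimension-count step you left implicit. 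Neither issue damages the mathematics, but both should be repaired before the proof is written up.
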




We close this subsection with the following result which gives a necessary and sufficient condition for a $01$-matrix generating $\mathfrak{X}$.

\begin{thm}\label{generates}
{\rm (\cite[Corollary 2.15]{GM})}~Let $\mathcal{M}$ be the Bose-Mesner algebra of $\mathfrak{X}$ and $A$ be a $01$-matrix in $\mathcal{M}$. Then $A$ generates $\mathcal{M}$ if and only if $A$ has $d+1$ distinct eigenvalues.
\end{thm}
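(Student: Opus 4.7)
The plan is to analyze $\langle A\rangle$ via the idempotent basis of $\mathcal{M}$. Since $\{E_0,\ldots,E_d\}$ is a basis of $\mathcal{M}$ with $E_iE_j=\delta_{ij}E_j$ and $E_j\ne 0$ for every $j$, any $A\in\mathcal{M}$ can be written uniquely as $A=\sum_{j=0}^{d}\lambda_j E_j$, where $\lambda_j\in\mathbb{C}$ is the eigenvalue of $A$ on the common eigenspace $V_j=E_j\mathbb{C}^{|X|}$.

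First, I would compute $\langle A\rangle$ using the expansion above. By the orthogonality relations for the $E_j$, we have $A^k=\sum_{j=0}^d\lambda_j^k E_j$ for every integer $k\ge 0$, and consequently $f(A)=\sum_{j=0}^d f(\lambda_j)\,E_j$ for every polynomial $f\in\mathbb{C}[x]$; in particular $\langle A\rangle\subseteq\mathcal{M}$. Let $\mu_1,\ldots,\mu_r$ be the distinct values among $\lambda_0,\ldots,\lambda_d$, and set $S_i=\{j:\lambda_j=\mu_i\}$. Lagrange interpolation supplies polynomials $g_1,\ldots,g_r\in\mathbb{C}[x]$ with $g_i(\mu_s)=\delta_{is}$, and then $g_i(A)=\sum_{j\in S_i}E_j$. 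These $r$ matrices are linearly independent (they are disjoint partial sums of the linearly independent $E_j$'s) and span $\langle A\rangle$, so $\dim\langle A\rangle=r$.

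Next, I would identify $r$ with the number of distinct eigenvalues of $A$ as a matrix. Because each $E_j$ is a nonzero orthogonal projector of rank $m_j>0$, the spectrum of $A$ consists of $\lambda_0,\ldots,\lambda_d$ with multiplicities $m_0,\ldots,m_d$, so $A$ has precisely $r$ distinct eigenvalues. Combining the two steps yields the equivalence: $\langle A\rangle=\mathcal{M}$ iff $\dim\langle A\rangle=d+1$ iff $r=d+1$ iff the scalars $\lambda_0,\ldots,\lambda_d$ are pairwise distinct iff $A$ has $d+1$ distinct eigenvalues.

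There is no substantial obstacle; the statement is essentially the standard fact that in a semisimple commutative algebra of dimension $d+1$, a single element is a generator exactly when its minimal polynomial has degree $d+1$. The $01$-hypothesis on $A$ is not used in the proof — the equivalence holds for every $A\in\mathcal{M}$ — it serves only to identify $A$ as the adjacency matrix of a (di)graph in the wider context of the paper.
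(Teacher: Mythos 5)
Your proof is correct and complete: the idempotent expansion $A=\sum_j\lambda_jE_j$, the Lagrange-interpolation argument giving $\dim\langle A\rangle=r$ (the number of distinct values among the $\lambda_j$), the identification of $r$ with the number of distinct eigenvalues of $A$ via $\operatorname{rank}E_j=m_j>0$, and the final dimension count against $\dim\mathcal{M}=d+1$ are all sound. Note, however, that the paper does not prove this statement at all — it imports it verbatim as Corollary 2.15 of \cite{GM} — so there is no internal proof to compare against; your argument is the standard one for this fact (a single element generates a commutative semisimple algebra exactly when its minimal polynomial has full degree), and your closing remark that the $01$-hypothesis plays no role in the equivalence is also accurate.
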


\subsection{Symmetric 2-class association schemes: strongly regular graphs}

The concept of a strongly regular graph is essentially the same as that of a symmetric $2$-class association scheme. We refer the reader to \cite{AEB98,AEB22,PJC99,CG01,JJS79} for further details on the general theory of strongly regular graphs.

Let $(X,R)$ be a regular graph of valency $k$ with $n$ vertices that is neither complete nor empty. Then $(X,R)$ is called {\em strongly regular} if any pair of adjacent vertices have $\lambda$ common neighbors, and any two distinct nonadjacent vertices have $\mu$ common neighbors. We say this is a strongly regular graph with parameters $(n,k,\lambda,\mu)$. It is easy to verify that $\mathfrak{X} = (X,\{R_0, R, \bar{R}\})$ is a symmetric association scheme,
where $R_0=\{(x,x)\mid x\in X\}$ and $\bar{R}=\{(x,y)\in X\times X\mid (x,y)\notin R\}$.

On the other hand, any symmetric $2$-class association scheme $\mathfrak{X}=(X,\{R_0,R_1,R_2\})$ is described as a pair of strongly regular graphs $(X,R_1)$ and $(X,R_2)$ with parameters $(n,k_1,p_{1,1}^1,p_{1,1}^2)$ and  $(n,k_2,p_{2,2}^2,p_{2,2}^1)$, respectively. Note that $n=k_1+k_2+1$.

Now we consider the strongly regular graphs which are not connected. 

\begin{thm}\label{disconnected}
{\rm (\cite[Theorem 3.11]{PJC99})}~A disconnected strongly
regular graph is a disjoint union of at least two complete graphs of the
same size. Conversely, if a graph is a disjoint union of at least two
complete graphs of the same size, then it is a disconnected strongly
regular graph.
\end{thm}

\begin{prop}\label{disconnected eigen}
Let $(X,R)$ be a strongly regular graph of valency $k$. Then the following are equivalent:
\begin{enumerate}
\item\label{disconnected eigen-1} $(X,R)$ is not connected;

\item\label{disconnected eigen-2} $(X,R)$ has $-1$ as its eigenvalue;

\item\label{disconnected eigen-3} $(X,R)$ has exactly two eigenvalues: $k$ and $-1$.
\end{enumerate}
\end{prop}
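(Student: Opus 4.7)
The plan is to prove the cyclic chain of implications (i) $\Rightarrow$ (iii) $\Rightarrow$ (ii) $\Rightarrow$ (i), leveraging Theorem \ref{disconnected} for the first step and the quadratic identity defining a strongly regular graph for the last.

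For (i) $\Rightarrow$ (iii), I would invoke Theorem \ref{disconnected} directly: a disconnected strongly regular graph must be a disjoint union of $t\geq 2$ copies of some $K_m$, and since it is regular of valency $k$ we have $m=k+1$. Its adjacency matrix is then the block-diagonal sum of $t$ copies of $J_{k+1}-I_{k+1}$, and each such block has spectrum $\{k,-1\}$. Hence the full spectrum is exactly $\{k,-1\}$. The implication (iii) $\Rightarrow$ (ii) is immediate.

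The main content is (ii) $\Rightarrow$ (i). The key tool is the identity $A^2=kI+\lambda A+\mu(J-I-A)$ satisfied by the adjacency matrix $A$ of a strongly regular graph with parameters $(n,k,\lambda,\mu)$. Since $A$ is symmetric, any eigenvalue $\theta\neq k$ admits an eigenvector orthogonal to the $k$-eigenspace, and this eigenspace contains the all-ones vector $\mathbf{1}$; hence such an eigenvector is annihilated by $J$. Applying the identity to it forces
$$\theta^2-(\lambda-\mu)\theta-(k-\mu)=0.$$
Substituting $\theta=-1$ (note that $-1\neq k$, since the graph is nonempty forces $k\geq 1$) yields $\lambda=k-1$. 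Combinatorially, $\lambda=k-1$ says that any two adjacent vertices $x,y$ share $k-1$ common neighbors; together with $x$ and $y$, these exhaust the $k+1$ vertices of $N[x]$, so $N[y]=N[x]$. Hence the connected component of any vertex is a clique $K_{k+1}$. Since $(X,R)$ is not complete by the definition of a strongly regular graph, it has at least two such components, so it is not connected.

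I do not anticipate a serious obstacle. The only delicate point is ensuring that $-1$ actually behaves as a non-trivial eigenvalue to which the quadratic applies, which reduces to the trivial inequality $k\geq 1$; everything else is a routine manipulation of the strongly-regular defining identity together with an appeal to Theorem \ref{disconnected}.
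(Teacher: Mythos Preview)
Your argument is correct. The cyclic chain (i) $\Rightarrow$ (iii) $\Rightarrow$ (ii) $\Rightarrow$ (i) works exactly as you describe, and the only point requiring care---that $-1$ is a genuinely non-principal eigenvalue so that the quadratic $\theta^{2}-(\lambda-\mu)\theta-(k-\mu)=0$ applies---is handled by $k\geq 1$.

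The paper itself does not spell any of this out: its entire proof is the sentence ``It is immediate from Theorem~\ref{disconnected} and \cite[Theorem 1.3.1]{AEB98}.'' In other words, the paper appeals to the structural characterisation of disconnected strongly regular graphs (your Theorem~\ref{disconnected}) together with the standard eigenvalue description of strongly regular graphs in Brouwer--Cohen--Neumaier, and leaves the reader to assemble the equivalences. Your route coincides with the paper's for (i) $\Rightarrow$ (iii), but for (ii) $\Rightarrow$ (i) you give a self-contained combinatorial argument (substitute $\theta=-1$ to obtain $\lambda=k-1$, then deduce that closed neighbourhoods of adjacent vertices coincide) rather than citing the literature. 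This buys you an explicit, elementary proof that does not rely on the reader knowing exactly what is packaged inside \cite[Theorem 1.3.1]{AEB98}; the paper's version is shorter but less transparent.
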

\begin{proof}
It is immediate from Theorem \ref{disconnected} and \cite[Theorem 1.3.1]{AEB98}.
\end{proof}

Now let $\Gamma$ be a connected strongly regular graph with parameters $(n,k,\lambda,\mu)$. Counting in two different ways the edges between vertices which are adjacent and nonadjacent
to a fixed $x\in X$, we get the well-known identity
\begin{align}\label{k2}
k(k-\lambda-1)=\mu(n-k-1).
\end{align}

The following result concerns the eigenvalues of a connected strongly regular graph.

\begin{thm}\label{three eigenvalue}
{\rm (\cite[Theorem 1.3.1]{AEB98} and \cite[Lemma 10.2.1]{CG01})}~A connected regular graph is strongly regular if and only if it has exactly three distinct eigenvalues.
\end{thm}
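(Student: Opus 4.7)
The plan is to base both directions on the matrix identity that characterises strong regularity: a $k$-regular graph on $n$ vertices with adjacency matrix $A$ is strongly regular with parameters $(n,k,\lambda,\mu)$ if and only if
\[
A^2 - (\lambda-\mu)A - (k-\mu)I = \mu J.
\]
Indeed, the $(x,y)$-entry of $A^2$ counts walks of length two, which equals $k$ on the diagonal and the number of common neighbours of $x,y$ off the diagonal, so this single equation precisely encodes the strongly regular condition. Under the theorem's hypotheses the graph is connected and regular, so by Proposition \ref{multiplicity} the valency $k$ is a simple eigenvalue with eigenvector the all-ones vector $\mathbf{1}$.

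For the forward direction, assume the graph is strongly regular. Any eigenvector $v$ of $A$ for an eigenvalue $\theta \neq k$ is orthogonal to $\mathbf{1}$, hence $Jv=0$, and the displayed identity forces $\theta^2 - (\lambda-\mu)\theta - (k-\mu) = 0$. Thus $A$ has at most three distinct eigenvalues. To rule out fewer, note that having only two distinct eigenvalues would make $A - \theta I$ a scalar multiple of $J$ (since it would vanish on the $(n-1)$-dimensional space $\mathbf{1}^{\perp}$ and be nonzero on $\mathbf{1}$), forcing every off-diagonal entry of $A$ to take the same value and hence the graph to be either empty or complete, contradicting the definition of strongly regular.

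For the converse, suppose $A$ has exactly three distinct eigenvalues $k, r, s$. The matrix $M := (A-rI)(A-sI)$ vanishes on both the $r$- and $s$-eigenspaces and equals $(k-r)(k-s)\neq 0$ on the one-dimensional $k$-eigenspace spanned by $\mathbf{1}$; hence $M$ is a scalar multiple of $J$, say $M = cJ$. Rearranging gives $A^2 = (r+s)A - rsI + cJ$, and reading off this identity at diagonal, adjacent off-diagonal and nonadjacent off-diagonal positions shows that the number of common neighbours of two distinct vertices depends only on whether they are adjacent, establishing strong regularity with $\mu = c$ and $\lambda = r + s + c$. The key step, and the main conceptual point of the proof, is the identification $M = cJ$: this is precisely where both hypotheses matter, since without connectedness the $k$-eigenspace need not be one-dimensional (by Proposition \ref{multiplicity}) and $M$ could then be a block projector associated to the components rather than a scalar multiple of the all-ones matrix.
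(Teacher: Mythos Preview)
Your proof is correct and follows the standard argument (essentially that of \cite[Lemma 10.2.1]{CG01}). Note, however, that the paper does not supply its own proof of this theorem: it is stated as a preliminary result with a citation to \cite[Theorem 1.3.1]{AEB98} and \cite[Lemma 10.2.1]{CG01}, so there is nothing in the paper to compare against beyond those references.
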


By Proposition \ref{multiplicity}, Theorem \ref{three eigenvalue}, and \cite[Section 10.2]{CG01}, $\Gamma$ has exactly three eigenvalues: $k$, $r$, and $s$ of multiplicity, respectively, 1, $m_1$, and $m_2$, where
\begin{align}
r,s&=\frac{(\lambda-\mu)\pm\sqrt{(\lambda-\mu)^2+4(k-\mu)}}{2},\label{eigenvalue}\\
m_{1},m_2&=\frac{1}{2}\Big[(n-1)\mp\frac{2k-(n-1)(\mu-\lambda)}{\sqrt{(\lambda-\mu)^2+4(k-\mu)}})\Big].\label{mulitiplicity}
\end{align}
By \eqref{eigenvalue}, Lemma \ref{regular}, and Proposition \ref{disconnected eigen}, we have $k>r\geq0>s\neq-1$.
We say that $\Gamma$ is a {\em conference graph} if $m_1=m_2$.

\begin{prop}\label{integer}
{\rm (\cite[Lemma 10.3.3]{CG01})}~Let $\Gamma$ be a strongly regular graph with eigenvalues $k$, $r$, and $s$. Then either
\begin{enumerate}
\item\label{integer-1} $\Gamma$ is a conference graph, or

\item\label{integer-2} $r$ and $s$ are integers.
\end{enumerate}
\end{prop}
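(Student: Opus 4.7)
My plan is to split on whether the symmetric non-identity relations of $\mathfrak{X}$ both exist. Either $R_1,R_2$ are both symmetric (so the symmetrization $\tilde{\mathfrak X}$ is a symmetric $3$-class scheme, matching the hypothesis of Theorems \ref{one pair} and \ref{main-general}), or $R_1^\top=R_2$ is a second nonsymmetric pair (so $\mathfrak{X}$ is skew-symmetric and $\tilde{\mathfrak X}$ is a $2$-class symmetric scheme, i.e., a strongly regular graph). I describe the plan for the main case, with a remark on the skew-symmetric one.

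When $R_1,R_2$ are symmetric: if $\tilde{\mathfrak X}$ is amorphic, Theorem \ref{main-general} with $d=3$ applies — part (i) gives that $\mathfrak{X}$ is generated by a digraph (since $d\leq 3$), and part (ii) supplies some $i\in\{1,2,3\}$ with $(X,R_i\cup R_3)$ having $5$ distinct eigenvalues and generating $\mathfrak{X}$; interchanging the labels $R_1\leftrightarrow R_2$ if necessary places $i\in\{2,3\}\subseteq\{2,3,4\}$. If $\tilde{\mathfrak X}$ is non-amorphic, then \cite[Theorem 1.1]{GM} produces a graph generator of $\tilde{\mathfrak X}$ with $4$ distinct eigenvalues; up to taking complements (which does not change the generated Bose-Mesner algebra), this generator is $(X,R_1)$, $(X,R_2)$, or $(X,R_3\cup R_4)$. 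When it is $(X,R_3\cup R_4)$, Theorem \ref{one pair} immediately yields that $(X,R_3)$ has $5$ distinct eigenvalues and generates $\mathfrak{X}$, so $i=3$. Otherwise, after a possible swap $R_1\leftrightarrow R_2$, the graph $(X,R_2)$ generates $\tilde{\mathfrak X}$, and I would prove that $(X,R_2\cup R_3)$ has $5$ distinct eigenvalues — whence by Theorem \ref{generates} it generates $\mathfrak{X}$, giving $i=2$.

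To verify the eigenvalue count in this last subcase, I note that on the primitive eigenspace $V_j$ of $\mathcal{M}$, the matrix $A_2+A_3$ has eigenvalue $p_2(j)+p_3(j)$. The scalars $p_3(0),p_3(1),p_3(2)$ are real while $p_3(3)=\overline{p_3(4)}$ are nonreal complex conjugates (nonreal precisely because $R_3$ is nonsymmetric), so the two eigenvalues on $V_3,V_4$ form a distinct conjugate pair, automatically separated from the three real eigenvalues on $V_0,V_1,V_2$. Since $A_2$ has $4$ distinct eigenvalues in $\tilde{\mathfrak X}$, the values $p_2(0),p_2(1),p_2(2)$ are three distinct reals. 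The main obstacle is ruling out a coincidence $p_2(j_1)+p_3(j_1)=p_2(j_2)+p_3(j_2)$ for distinct $j_1,j_2\in\{0,1,2\}$: I would translate any such coincidence, via the Bannai-Muzychuk criterion (Theorem \ref{Bannai-Muzychuk}), into an admissible fusion incompatible with the structural constraints on $(X,R_3\cup R_4)$, which by its failure to generate $\tilde{\mathfrak X}$ must be either a disjoint union of complete graphs or a connected strongly regular graph by Proposition \ref{disconnected eigen} and Theorem \ref{three eigenvalue}. The skew-symmetric case is then handled by an analogous direct spectral analysis of $A_3$ and $A_2+A_3$ on the two conjugate-pair idempotent blocks $(E_1,E_2)$ and $(E_3,E_4)$, again using a Bannai-Muzychuk argument to rule out coincidences among the real and conjugate-pair components and produce some $i\in\{2,3\}$ with the required $5$ distinct eigenvalues.
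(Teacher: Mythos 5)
Your proposal does not address the statement at all. Proposition \ref{integer} is a purely spectral fact about a single strongly regular graph: either it is a conference graph (meaning $m_1=m_2$), or its restricted eigenvalues $r$ and $s$ are integers. Nothing in your text touches this dichotomy. Instead you sketch a case analysis (skew-symmetric versus one nonsymmetric pair, amorphic versus non-amorphic symmetrization, appeals to Theorems \ref{one pair} and \ref{main-general} and the Bannai--Muzychuk criterion) aimed at producing a digraph with $5$ distinct eigenvalues that generates a nonsymmetric $4$-class association scheme --- that is, you are outlining a proof of Theorem \ref{main}, not of Proposition \ref{integer}. No such machinery can be relevant here: the proposition concerns an arbitrary strongly regular graph, with no association scheme beyond the trivial $2$-class one and no generation hypothesis whatsoever, so the entire argument is a non sequitur for the stated claim.

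For the record, the paper gives no proof of its own but simply cites \cite[Lemma 10.3.3]{CG01}, and the standard argument is short. By \eqref{mulitiplicity} the multiplicities $m_1$ and $m_2$ are nonnegative integers given by $\frac{1}{2}\bigl[(n-1)\mp\frac{2k-(n-1)(\mu-\lambda)}{\sqrt{(\lambda-\mu)^2+4(k-\mu)}}\bigr]$. If $m_1=m_2$, then $\Gamma$ is a conference graph by definition and we are in case \ref{integer-1}. If $m_1\neq m_2$, then $2k-(n-1)(\mu-\lambda)\neq 0$, so $\sqrt{(\lambda-\mu)^2+4(k-\mu)}$ is rational; being the square root of a positive integer, it is an integer, and hence by \eqref{eigenvalue} both $r$ and $s$ are rational. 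Moreover $r+s=\lambda-\mu$ and $rs=\mu-k$, so $r$ and $s$ are roots of the monic integer polynomial $x^2-(\lambda-\mu)x-(k-\mu)$, i.e., they are algebraic integers; a rational algebraic integer is an integer, which gives case \ref{integer-2}.
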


On the other hand, we can express the parameters of a connected strongly regular graph $\Gamma$ starting from its eigenvalues and multiplicities: $n=1+m_1+m_2$,
\begin{align}
\lambda&=k+rs+r+s,\label{lambda}\\
\mu&=k+rs.\label{mu}
\end{align}

\section{Proofs of Theorems \ref{one pair} and \ref{main-general}}

In this section, we always assume that $\mathfrak{X}=(X,\{R_i\}_{i=0}^{d+1})$ is a commutative association scheme with primitive idempotents $E_0,E_1,\ldots,E_{d+1}$, where $R_d^{\top}=R_{d+1}$ and $R_i^{\top}=R_i$ for $0\leq i\leq d-1$. Let $\tilde{\mathfrak{X}}=(X,\{\tilde{R}_i\}_{i=0}^d)$ be the symmetrization of $\mathfrak{X}$ with primitive idempotents $\tilde{E}_0,\tilde{E}_1,\ldots,\tilde{E}_{d}$, where $\tilde{R}_d=R_d\cup R_{d+1}$ and $\tilde{R}_i=R_i$ for $0\leq i\leq d-1$.

The following result determines the character table of a commutative association scheme with exactly one pair of nonsymmetric relations.

\begin{thm}\label{fission scheme}
{\rm (\cite[Theorem 2.1]{GLC06})}~Suppose that the character table $\tilde{P}$ of $\tilde{\mathfrak{X}}$ has form as \eqref{character table}. If $\tilde{E}_1=E_1+E_2$ and $\tilde{E}_i=E_{i+1}$ for $2\leq i\leq d$, then
the character table of $\mathfrak{X}$ has the following form:
\begin{align}
P=\left(\begin{array}{ccccc|cc}
1 & k_1     & k_2   & \cdots & k_{d-1} & k_d/2     & k_d/2\\\hline
1 & p_1(1)  & p_2(1)& \cdots & p_{d-1}(1) & \rho      & \bar{\rho}\\
1 & p_1(1)  & p_2(1)& \cdots & p_{d-1}(1) & \bar{\rho}& \rho\\\hline
1 & p_1(2)  & p_2(2)& \cdots & p_{d-1}(2) & p_d(2)/2  & p_d(2)/2\\
1 & p_2(3)  & p_3(3)& \cdots & p_{d-1}(3)     & p_d(3)/2  & p_d(3)/2\\
\vdots& \vdots& \vdots     &\vdots   & \vdots& \ddots & \vdots\\
1 & p_2(d)  & p_3(d)& \cdots & p_{d-1}(d) & p_d(d)/2  & p_d(d)/2
\end{array}\right),
\nonumber
\end{align}
where $\rho=(p_d(1)+\sqrt{a})/2$ and $a<0$.
\end{thm}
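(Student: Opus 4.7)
The plan is to read off the character table $P$ of $\mathfrak{X}$ by comparing the actions of the adjacency matrices on the primitive idempotents of $\mathfrak{X}$ and $\tilde{\mathfrak{X}}$. First, since $\tilde{E}_i=E_{i+1}$ for $2\leq i\leq d$, the idempotents $E_3,\ldots,E_{d+1}$ are real; and since $\tilde{E}_1=E_1+E_2$ is real while $E_1\neq E_2$, the primitive idempotents $E_1,E_2$ must be complex conjugates of each other. Consequently $V_2=\overline{V_1}$, while $V_l$ for $l\geq 3$ is closed under complex conjugation.

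Next I would fill in the $d-1$ "old" columns of $P$ (those corresponding to $A_1,\ldots,A_{d-1}$). For $1\leq j\leq d-1$, the adjacency matrix satisfies $A_j=\tilde{A}_j$, so $A_j$ acts as the scalar $p_j(i)$ on $\tilde{V}_i=V_{i+1}$ for $2\leq i\leq d$; this determines all rows of $P$ from the fourth onward. For $\tilde{V}_1=V_1\oplus V_2$, the same $A_j$ acts as $p_j(1)$ on the whole direct sum, and since $V_1$ and $V_2$ are $A_j$-invariant (being common eigenspaces of the Bose-Mesner algebra), $A_j$ acts as $p_j(1)$ on each of them separately; this is the reason rows~2 and~3 agree in these columns.

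For the last two columns (those for $A_d$ and $A_{d+1}=A_d^{\top}$), I would use that $A_d$ and $A_d^{\top}$ are commuting normal matrices, so on each common eigenspace the eigenvalue of $A_{d+1}$ is the complex conjugate of that of $A_d$. On $V_0$ both eigenvalues equal the common valency $k_d/2$ of $R_d$ and $R_{d+1}$. On each real $V_l$ with $l\geq 3$, the eigenvalue of $A_d$ must be real (invariance under the conjugation action of $V_l$), hence equals its conjugate, and the sum of the two eigenvalues equals the eigenvalue $p_d(l-1)$ of $\tilde{A}_d=A_d+A_{d+1}$ on $\tilde{V}_{l-1}=V_l$; each entry therefore equals $p_d(l-1)/2$. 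On $V_1$ put $\rho:=$ the eigenvalue of $A_d$; then $A_{d+1}$ acts as $\bar{\rho}$, and since $V_2=\overline{V_1}$ the two roles swap on $V_2$. The identity $\rho+\bar{\rho}=p_d(1)$ coming from $\tilde{A}_d$ forces $\mathrm{Re}(\rho)=p_d(1)/2$, so $\rho=(p_d(1)+\sqrt{a})/2$ for some $a\leq 0$.

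To pin down $a<0$, I would argue that if $a=0$ then $\rho=\bar{\rho}$ is real and rows~2 and~3 of $P$ coincide, contradicting the invertibility of the character table (equivalently, contradicting $E_1\neq E_2$). Hence $\rho$ is non-real and $a<0$. The main technical nuisance is bookkeeping the two indexing conventions (of $\mathfrak{X}$ versus $\tilde{\mathfrak{X}}$) and carefully tracking the realness and conjugation structure of the idempotents; once this is set up, every entry of $P$ is read off by a direct eigenvalue computation.
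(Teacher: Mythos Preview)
The paper does not prove this theorem; it is quoted without proof from \cite[Theorem~2.1]{GLC06}. Your argument is the standard one and is correct.

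One small point deserves tightening. Your opening claim that $E_1$ and $E_2$ must be complex conjugates does not follow merely from $\tilde{E}_1=E_1+E_2$ being real and $E_1\neq E_2$; a priori both could be real. The clean justification is that complex conjugation permutes the primitive idempotents of $\mathfrak{X}$ and fixes $E_0,E_3,\ldots,E_{d+1}$ (these equal $\tilde{E}_0,\tilde{E}_2,\ldots,\tilde{E}_d$, which are real), so $\overline{E_1}\in\{E_1,E_2\}$. If $\overline{E_1}=E_1$ then $E_1$ and $E_2$ are both real, the eigenvalue of $A_d$ on each of $V_1,V_2$ is real, and the identity $A_d+A_{d+1}=\tilde{A}_d$ forces both eigenvalues to equal $p_d(1)/2$; rows~2 and~3 of $P$ then coincide, contradicting invertibility. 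Thus $\overline{E_1}=E_2$, and your argument proceeds exactly as written. This is really the same invertibility argument you invoke at the end for $a\neq 0$, just applied a step earlier, so the gap is cosmetic rather than substantive.
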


Now we are ready to give a proof of Theorem \ref{one pair}.

\begin{proof}[Proof of Theorem \ref{one pair}]
Suppose that the character table $\tilde{P}$ of $\tilde{\mathfrak{X}}$ has form as \eqref{character table}. By Theorem \ref{Bannai-Muzychuk}, we may assume $\tilde{E}_1=E_1+E_2$ and $\tilde{E}_j=E_{j+1}$ for $2\leq j\leq d$. Since $(X,\tilde{R}_d)$ generates $\tilde{\mathfrak{X}}$, from Theorem \ref{generates}, $(X,\tilde{R}_d)$ has $d+1$ distinct eigenvalues. In view of \eqref{character table}, all distinct eigenvalues of $(X,\tilde{R}_d)$ are $k_d,p_d(1),p_d(2),\ldots,p_d(d)$. By Theorem \ref{fission scheme}, all distinct eigenvalues of $(X,R_d)$ are $k_d/2,p_d(2)/2,p_d(3)/2,\ldots,p_d(d)/2$, and $(p_d(1)\pm\sqrt{a})/2$ with $a<0$. Then \ref{one pair-1} holds, and \ref{one pair-2} is valid from Theorem \ref{generates}.
\end{proof}

In the remainder of this section, we always assume that $\tilde{\mathfrak{X}}$ is an amorphic association scheme with the character table $\tilde{P}$ having form as \eqref{amorphic}. To give a proof of Theorem \ref{main-general}, we need several auxiliary results.

\begin{lemma}\label{a b}
Let $d\geq3$. If $1\leq i<j\leq d$, then $a_i+b_j=a_j+b_i$.
\end{lemma}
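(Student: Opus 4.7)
The plan is to extract the identity directly from Lemma \ref{row} applied to each non-trivial row of the character table $\tilde{P}$. Looking at the form \eqref{amorphic}, row $i$ (for $1\leq i\leq d$) has entry $1$ in column $0$, entry $b_i$ in column $i$, and entry $a_l$ in every other column $l\in\{1,\ldots,d\}\setminus\{i\}$. This is the only structural feature I will use.

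First I would apply Lemma \ref{row} to row $i$ to obtain
\[
1 + b_i + \sum_{\substack{l=1\\ l\ne i}}^{d} a_l = 0.
\]
Writing $S=\sum_{l=1}^{d}a_l$, this rearranges to $b_i - a_i = -1 - S$. Crucially, the right-hand side does not depend on $i$, so $b_i-a_i$ takes a common value for all $i\in\{1,\ldots,d\}$.

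Then for any $1\leq i<j\leq d$, subtracting the two instances of the identity gives $b_j-a_j = b_i-a_i$, which is the desired relation $a_i + b_j = a_j + b_i$. The hypothesis $d\geq 3$ plays no role beyond fixing the context in which the lemma is applied; the argument works as soon as the shape \eqref{amorphic} is available. There is no real obstacle here: the only point to notice is that in each row $i\ge 1$ exactly one $a_i$ has been replaced by $b_i$, which is precisely what makes the difference $b_i-a_i$ (rather than $a_i$ or $b_i$ individually) the right quantity to isolate via the row-sum identity.
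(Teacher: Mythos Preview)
Your proof is correct and takes a genuinely different route from the paper. The paper argues combinatorially: since $\tilde{\mathfrak{X}}$ is amorphic, the merged graph $(X,\tilde{R}_i\cup\tilde{R}_j)$ is strongly regular, and its eigenvalues read off from \eqref{amorphic} are $k_i+k_j$, $a_i+a_j$, $a_i+b_j$, $b_i+a_j$; assuming $a_i+b_j\neq a_j+b_i$ forces four distinct eigenvalues, which (after checking connectedness via Propositions \ref{multiplicity} and \ref{disconnected eigen}) contradicts Theorem \ref{three eigenvalue}. Your argument bypasses all of this: once the character table has the shape \eqref{amorphic}, the row-sum identity of Lemma \ref{row} immediately yields $b_i-a_i=-1-\sum_{l}a_l$ for every $i$, and the lemma follows by subtraction. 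Your approach is shorter and purely algebraic, using only the form of $\tilde{P}$ rather than the graph-theoretic content of amorphicity; the paper's approach, on the other hand, makes visible why the equality is forced combinatorially (the fusion graph simply cannot accommodate a fourth eigenvalue). Your remark that $d\geq 3$ is not essential to the computation is also accurate; it matters only to ensure there are at least two non-trivial rows to compare.
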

\begin{proof}
By \eqref{amorphic}, all eigenvalues of the graph $(X,\tilde{R}_i\cup\tilde{R}_j)$ are $k_{i}+k_j$, $a_i+a_j$, $a_i+b_j$, and $b_i+a_j$. Since $a_i\neq b_i$ and $a_j\neq b_j$, we have $a_i+a_j\notin\{a_i+b_j,b_i+a_j\}$.

Suppose to the contrary that $a_i+b_j\neq b_i+a_j$. Then the graph $(X,\tilde{R}_i\cup\tilde{R}_j)$ has at least three distinct eigenvalues. Since $(X,\tilde{R}_i\cup\tilde{R}_j)$ is a strongly regular graph of valency $k_i+k_j$, from Proposition \ref{disconnected eigen}, $(X,\tilde{R}_i\cup\tilde{R}_j)$ is connected, which implies that $k_i+k_j$ is an eigenvalue of $(X,\tilde{R}_i\cup\tilde{R}_j)$ with multiplicity $1$ by Proposition \ref{multiplicity}. It follows that $k_i+k_j\notin\{a_i+a_j,a_i+b_j,b_i+a_j\}$. Then the connected strongly regular graph $(X,\tilde{R}_i\cup\tilde{R}_j)$ has four distinct eigenvalues, contrary to Theorem \ref{three eigenvalue}.
\end{proof}

\begin{lemma}\label{d d+1}
Let $\Lambda$ be a nonempty subset of $\{1,2,\ldots,d+1\}$ with $d\geq2$. If the digraph $(X,R_{\Lambda})$ has $d+2$ distinct eigenvalues, then exactly one element in $\{d,d+1\}$ belongs to $\Lambda$.
\end{lemma}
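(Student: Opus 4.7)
The plan is to show the contrapositive: if $\Lambda$ contains both or neither of $d$ and $d+1$, then the adjacency matrix $A_\Lambda := \sum_{i\in\Lambda}A_i$ of $(X,R_\Lambda)$ lies in the Bose--Mesner algebra $\tilde{\mathcal{M}}$ of the symmetrization $\tilde{\mathfrak{X}}$, and therefore has at most $d+1$ distinct eigenvalues, contradicting the assumption of $d+2$.

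First I would observe the structural fact underlying the argument: since $\tilde{\mathcal{M}}$ has the primitive idempotents $\tilde{E}_0,\tilde{E}_1,\ldots,\tilde{E}_d$ as a basis, every matrix $M\in\tilde{\mathcal{M}}$ admits a spectral decomposition $M=\sum_{j=0}^d \alpha_j \tilde{E}_j$, whence the set of eigenvalues of $M$ is $\{\alpha_0,\alpha_1,\ldots,\alpha_d\}$ and has cardinality at most $d+1$. This is the only eigenvalue-counting input I need.

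Next I would split into the two cases. In the case $\Lambda\cap\{d,d+1\}=\emptyset$, we have $\Lambda\subseteq\{0,1,\ldots,d-1\}$, and for each such $i$ the relation $R_i$ is symmetric with $R_i=\tilde{R}_i$, so $A_i\in\tilde{\mathcal{M}}$ and hence $A_\Lambda\in\tilde{\mathcal{M}}$. In the case $\{d,d+1\}\subseteq\Lambda$, I would use $A_d+A_{d+1}=\tilde{A}_d$ (the adjacency matrix of $\tilde{R}_d$) to rewrite
\begin{equation*}
A_\Lambda=\sum_{i\in\Lambda\cap\{1,\ldots,d-1\}}\tilde{A}_i+\tilde{A}_d,
\end{equation*}
which again lies in $\tilde{\mathcal{M}}$. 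Applying the spectral bound from the previous paragraph, $A_\Lambda$ has at most $d+1$ distinct eigenvalues in both cases, contradicting the hypothesis that it has $d+2$. Hence exactly one of $d$ and $d+1$ belongs to $\Lambda$.

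There is no real obstacle here; the argument is essentially bookkeeping once one recognizes that mixing $R_d$ with $R_{d+1}$ (or avoiding both) forces the relevant matrix into the smaller algebra $\tilde{\mathcal{M}}$. The only mildly delicate point is the spectral bound for elements of a Bose--Mesner algebra, which follows immediately from writing the matrix in the primitive-idempotent basis and using orthogonality of the $\tilde{E}_j$'s. I would state this fact briefly at the start rather than derive it in line.
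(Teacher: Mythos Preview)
Your argument is correct, and it actually differs from the paper's proof in an interesting way. The paper exploits the standing assumption (made just before this lemma) that the symmetrization $\tilde{\mathfrak{X}}$ is \emph{amorphic}: if $\Lambda$ contains both or neither of $d,d+1$, then $R_\Lambda=\tilde{R}_\Omega$ for some proper subset $\Omega\subseteq\{1,\dots,d\}$, and by amorphicity $(X,\tilde{R}_\Omega)$ is strongly regular, hence has at most three distinct eigenvalues (Proposition~\ref{disconnected eigen} and Theorem~\ref{three eigenvalue}); since $d\ge 2$ forces $d+2\ge 4>3$, this is a contradiction. Your route instead lands $A_\Lambda$ in $\tilde{\mathcal{M}}$ and bounds the eigenvalue count by $\dim\tilde{\mathcal{M}}=d+1$ via the primitive-idempotent decomposition; this is weaker as a bound but does not use the amorphic hypothesis at all, so your proof of the lemma holds for any commutative scheme with exactly one nonsymmetric pair. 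Either argument suffices here; yours is the more portable one, while the paper's squeezes more out of the ambient assumption. (One trivial slip: in the first case you wrote $\Lambda\subseteq\{0,1,\dots,d-1\}$ but should have $\{1,\dots,d-1\}$, since $0\notin\Lambda$ by hypothesis.)
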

\begin{proof}
Since $(X,\tilde{R}_{\Omega})$ is strongly regular for all proper subset $\Omega$ of $\{1,2,\ldots,d\}$, from Proposition \ref{disconnected eigen} and Theorem \ref{three eigenvalue}, $(X,\tilde{R}_{\Omega})$ has at most three distinct eigenvalues. Thus, the desired result follows.
\end{proof}

\begin{lemma}\label{i j}
Let $V_i=E_i\mathbb{C}^{|X|}$ for $0\leq i\leq d+1$. Suppose $E_i=\tilde{E}_r$ and $E_j=\tilde{E}_s$ with $r,s\in\{1,2,\ldots,d-1\}$ for $1\leq i<j\leq d+1$. 
Then $A_r$ and $A_s$ are the only two adjacency matrices of $\mathfrak{X}$ having distinct eigenvalues on the eigenspaces $V_i$ and $V_j$.
\end{lemma}
\begin{proof}
Since $E_i=\tilde{E}_r$ and $E_j=\tilde{E}_s$, from \eqref{amorphic} and Theorems \ref{Bannai-Muzychuk}, \ref{fission scheme}, the eigenvalues of $A_r$ (resp. $A_{s}$) on the eigenspaces $V_i$ and $V_j$ are $b_r$ and $a_r$ (resp. $a_{s}$ and $b_s$) respectively. By \eqref{amorphic} and Theorems \ref{Bannai-Muzychuk}, \ref{fission scheme} again, the eigenvalues of $A_{h}$ on the eigenspaces $V_i$ and $V_j$ are both $a_h$ for $h\in\{1,2,\ldots,d-1\}\setminus\{r,s\}$. Since $d\notin\{r,s\}$, the eigenvalues of $A_{l}$ on the eigenspaces $V_i$ and $V_j$ are both $a_d/2$ for $l\in\{d,d+1\}$. This completes the proof.
\end{proof}

\begin{lemma}\label{contradiction}
Let $V_i=E_i\mathbb{C}^{|X|}$ for $0\leq i\leq d+1$. Suppose that $\mathfrak{X}$ can be generated by a digraph. Then there exist at most two distinct integers $i,j\in\{1,2,\ldots,d+1\}$ such that the eigenvalues of $A_d$ on the eigenspaces $V_i$ and $V_j$ are the same.
\end{lemma}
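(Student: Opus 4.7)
The plan is to argue by contradiction: suppose three pairwise distinct indices $i_1, i_2, i_3 \in \{1,\ldots,d+1\}$ satisfy $p_d(i_1) = p_d(i_2) = p_d(i_3) = v$. Since $\mathfrak{X}$ is generated by a digraph, its adjacency matrix $A$ is a $01$-matrix in the Bose--Mesner algebra of $\mathfrak{X}$, hence has the form $A = \sum_{k\in \Lambda} A_k$ for some $\Lambda \subseteq \{1,\ldots,d+1\}$, and Theorem~\ref{generates} guarantees that its $d+2$ eigenvalues $A|_{V_j}$ ($0 \le j \le d+1$) are all distinct. Lemma~\ref{d d+1} tells us that exactly one of $d$ and $d+1$ lies in $\Lambda$; after replacing $\Gamma$ by its converse if necessary, I may assume $d \in \Lambda$ and set $B = A - A_d = \sum_{k\in \Lambda\setminus\{d\}} A_k$, which lies in the Bose--Mesner algebra of $\tilde{\mathfrak{X}}$.

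Next, I would rule out non-real $v$. Let $\tilde{E}_t = E_{j_1} + E_{j_2}$ be the unique primitive idempotent of $\tilde{\mathfrak{X}}$ that splits in $\mathfrak{X}$. Because $A_d$ is a real normal matrix and $A_d^{\top} = A_{d+1} \ne A_d$, its non-real eigenvalues must correspond to eigenspaces exchanged by complex conjugation; the only such pair in $\mathfrak{X}$ is $\{E_{j_1}, E_{j_2}\}$, so $p_d(j_1) = \overline{p_d(j_2)}$ is non-real, whereas $p_d(j)$ is real for every $j \notin \{j_1, j_2\}$ (this is also visible from Theorem~\ref{fission scheme}). In particular, a non-real $v$ could be attained by at most one index $i_k$, contradicting the three-index assumption. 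Hence $v$ is real and $\{i_1, i_2, i_3\} \subseteq \{1,\ldots,d+1\} \setminus \{j_1, j_2\}$.

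The final step exploits the amorphic structure. Each $i_k$ corresponds to a unique $\tilde V_{s_k}$ with $s_1, s_2, s_3$ pairwise distinct in $\{1,\ldots,d\}\setminus\{t\}$, and since $B$ lies in the Bose--Mesner algebra of $\tilde{\mathfrak{X}}$ the eigenvalue of $B$ on $V_{i_k}$ equals the eigenvalue of $B$ on $\tilde V_{s_k}$. The character table~\eqref{amorphic} yields
\[
B|_{\tilde V_s} = \begin{cases} \sum_{k\in \Lambda\setminus\{d\}} a_k + (b_s - a_s) & \text{if } s \in \Lambda\setminus\{d\}, \\ \sum_{k\in \Lambda\setminus\{d\}} a_k & \text{otherwise,} \end{cases}
\]
and Lemma~\ref{a b} says $b_s - a_s$ is independent of $s \in \{1,\ldots,d\}$. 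Thus $B$ takes at most two distinct values on $\tilde V_1,\ldots, \tilde V_d$; by pigeonhole, two of $s_1, s_2, s_3$ lie in the same class, forcing $B|_{V_{i_a}} = B|_{V_{i_b}}$ and hence $A|_{V_{i_a}} = A|_{V_{i_b}}$ for some $a \ne b$, contradicting the distinctness of the eigenvalues of $A$.

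The main obstacle is the reality step: one must cleanly justify that $\{V_{j_1}, V_{j_2}\}$ is the only possible locus of non-real eigenvalues of $A_d$, either by reasoning from normality together with $R_d \ne R_d^{\top}$, or by quoting Theorem~\ref{fission scheme} directly. Once this is settled, the amorphic pigeonhole argument driven by Lemma~\ref{a b} is short and direct.
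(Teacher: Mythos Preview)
Your argument is correct and follows the same contradiction template as the paper, but your endgame is tidier. After reducing (via Lemma~\ref{d d+1} and the reality step) to three indices $i_1,i_2,i_3$ corresponding to unsplit idempotents $\tilde E_{s_1},\tilde E_{s_2},\tilde E_{s_3}$, the paper proceeds by singling out specific $r,s\in\Lambda$ that distinguish the pairs $(V_i,V_j)$ and $(V_i,V_l)$, invoking the auxiliary Lemma~\ref{i j} for this; it then uses Lemma~\ref{a b} to show that $A_r+A_s+A_d$ has equal eigenvalues on $V_j$ and $V_l$, and applies Lemma~\ref{i j} once more to conclude that every remaining $A_t$ with $t\in\Lambda$ also fails to separate them. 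You instead compute $B=A-A_d$ on $\tilde V_s$ directly from the amorphic table~\eqref{amorphic} and observe that, by Lemma~\ref{a b}, $B$ takes at most two values on $\tilde V_1,\ldots,\tilde V_d$ (according to whether $s\in\Lambda\setminus\{d\}$ or not); a pigeonhole among $s_1,s_2,s_3$ then immediately forces two equal eigenvalues of $A$. This bypasses Lemma~\ref{i j} altogether and is shorter. Your reality step is fine either way you propose; quoting Theorem~\ref{fission scheme} is the most direct route and is in effect what the paper does when it asserts $E_i,E_j,E_l\in\{\tilde E_1,\ldots,\tilde E_{d-1}\}$. One minor remark: Lemma~\ref{a b} is stated for $d\geq 3$, but this is harmless since the existence of three distinct $s_k$ in $\{1,\ldots,d\}\setminus\{t\}$ already forces $d\geq 4$.
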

\begin{proof}
Let $\Gamma$ be a digraph generating $\mathfrak{X}$. Since the adjacency matrix of $\Gamma$ belongs to the Bose-Mesner algebra of $\mathfrak{X}$, there exists a nonempty subset $\Lambda$ of $\{1,2,\ldots,d+1\}$ such that $\Gamma=(X,R_{\Lambda})$. By Theorem \ref{generates}, $(X,R_{\Lambda})$ has $d+2$ distinct eigenvalues.

Assume the contrary, namely, there exist three distinct positive integers $i,j,l\in\{1,2,\ldots,d+1\}$ such that all the eigenvalues of $A_d$ on the eigenspaces $V_i,V_j$, and $V_l$ are the same. By \eqref{amorphic} and Theorems \ref{Bannai-Muzychuk}, \ref{fission scheme}, we have $E_i=\tilde{E}_t$, $E_j=\tilde{E}_r$, and $E_l=\tilde{E}_s$ for some distinct integers $t,r,s\in\{1,2,\ldots,d-1\}$, and the eigenvalues of $A_d$ on the eigenspaces $V_i,V_j$, and $V_l$ are all $a_d/2$. Moreover, the eigenvalues of $A_{d+1}$ on the eigenspaces $V_i,V_j$, and $V_l$ are also all $a_d/2$. By Lemma \ref{d d+1}, we may assume $d\in\Lambda$. Then $d+1\notin\Lambda$.

Since the digraph $(X,R_{\Lambda})$ has $d+2$ distinct eigenvalues, the eigenvalues of $\sum_{h\in\Lambda}A_{h}$ on the eigenspaces $V_i$ and $V_j$ are distinct. Since $E_i=\tilde{E}_t$ and $E_j=\tilde{E}_r$, from Lemma \ref{i j}, we may assume that $r\in\Lambda$. In view of \eqref{amorphic}, the eigenvalues of $A_r$ on the eigenspaces $V_i$ and $V_j$ are $a_r$ and $b_r$, respectively. Since $E_l=\tilde{E}_s$, from \eqref{amorphic}, the eigenvalue of $A_r$ on the eigenspace $V_l$ is $a_r$. Thus, the eigenvalues of $A_r+A_d$ on the eigenspaces $V_i,V_j$, and $V_l$ are $a_r+a_d/2$, $b_r+a_d/2$, and $a_r+a_d/2$, respectively.

Since the digraph $(X,R_{\Lambda})$ has $d+2$ distinct eigenvalues, the eigenvalues of $\sum_{h\in\Lambda}A_{h}$ on the eigenspaces $V_i$ and $V_l$ are distinct. Since $E_i=\tilde{E}_t$ and $E_l=\tilde{E}_s$, from Lemma \ref{i j}, we may assume that $s\in\Lambda$. In view of \eqref{amorphic}, the eigenvalues of $A_s$ on the eigenspaces $V_i$ and $V_l$ are $a_s$ and $b_s$, respectively. Since $E_j=\tilde{E}_r$, from \eqref{amorphic}, the eigenvalue of $A_s$ on the eigenspace $V_j$ is $a_s$. Thus, the eigenvalues of $A_r+A_s+A_d$ on the eigenspaces $V_i,V_j$, and $V_l$ are $a_r+a_s+a_d/2$, $b_r+a_s+a_d/2$, and $a_r+b_s+a_d/2$, respectively.

By Lemma \ref{a b}, the eigenvalues of $A_r+A_s+A_d$ on the eigenspaces $V_j$ and $V_l$ are the same. Since $E_j=\tilde{E}_r$ and $E_l=\tilde{E}_s$, from Lemma \ref{i j}, the eigenvalues of $A_h$ on the eigenspaces $V_j$ and $V_l$ are the same for all $h\in\Lambda\setminus\{s,r\}$. It follows that the eigenvalues of $\sum_{h\in\Lambda}A_h$ on the eigenspaces $V_j$ and $V_l$ are also the same, contrary to the fact that $(X,R_{\Lambda})$ has $d+2$ distinct eigenvalues.
\end{proof}

\begin{lemma}\label{two disconnected}
If $d\geq3$, then $a_i,b_i<k_i$ for $1\leq i\leq d$.
\end{lemma}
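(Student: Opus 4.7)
The plan is to argue by contradiction, assuming that $a_i = k_i$ or $b_i = k_i$ for some $i \in \{1,\ldots,d\}$. Lemma~\ref{regular} gives $a_i,b_i\leq k_i$, so by the character table~\eqref{amorphic}, $\tilde{A}_i$ has at most two distinct eigenvalues. Since $a_i\neq b_i$, it has exactly two, namely $k_i$ and one of $a_i,b_i$. Proposition~\ref{disconnected eigen} then forces $(X,\tilde{R}_i)$ to be disconnected, Theorem~\ref{disconnected} identifies it as a disjoint union of complete graphs $K_{k_i+1}$, and the non-$k_i$ eigenvalue equals $-1$. Relabeling if necessary (the other case being symmetric), I would assume $b_i=k_i$ and $a_i=-1$.

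Next I would apply Lemma~\ref{a b} (which uses $d\geq3$) to deduce $b_j-a_j = k_i+1$ for every $j\neq i$, together with Lemma~\ref{row} on the row of $\tilde{P}$ indexed by $V_i$ to obtain $\sum_{j=1}^{d}a_j = -k_i-2$. These constraints alone do not yield a contradiction; for instance, the Klein four-group association scheme satisfies them with $d=3$, $k_j=b_j=1$, $a_j=-1$ for all $j$. The contradiction must therefore exploit the standing hypothesis that $\tilde{\mathfrak{X}}$ is the symmetrization of some $\mathfrak{X}$ with $R_d^{\top}=R_{d+1}\neq R_d$, a property the Klein scheme crucially lacks.

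To use this, I would invoke Theorem~\ref{fission scheme}: after relabeling so that $\tilde{E}_1$ is the splitting idempotent, the character table of $\mathfrak{X}$ has the stated form with $\rho=(a_d+\sqrt{a})/2$ and $a<0$. The disjoint-clique structure of $(X,\tilde{R}_i)$ produces a scheme-compatible partition $\pi$ of $X$, which forces the intersection numbers $p_{d,d}^{l}$ and $p_{d,d+1}^{l}$ in $\mathfrak{X}$ to vanish for $l\notin\{0,i,d,d+1\}$. I would then split into subcases. When $i=d$, the arcs of $R_d$ orient each clique of $\pi$ into a regular tournament, and the scheme axioms force it to be doubly regular; combining this with the constraint $b_j-a_j=k_d+1$ for $j\neq d$ obtained above should yield a numerical incompatibility. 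When $i\neq d$, every arc of $R_d$ lies between two distinct components of $\pi$; analyzing the product $A_dA_d^{\top}=A_dA_{d+1}$ in terms of the components and comparing with the eigenvalues prescribed by Theorem~\ref{fission scheme} should force $a\geq0$, contradicting $a<0$.

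The hardest part of the proof will be making the final contradiction precise and uniform across the two subcases. In particular, in the case $i\neq d$, one must track carefully how the block structure imposed by $\pi$ propagates through the nonsymmetric product $A_d A_{d+1}$ and ultimately constrains the discriminant $a$ in Theorem~\ref{fission scheme} to be nonnegative, thereby ruling out the splitting $\tilde{E}_1=E_1+E_2$ into a complex conjugate pair of primitive idempotents.
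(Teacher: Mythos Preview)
The paper's proof is essentially a single citation: it invokes \cite[Theorem~1]{ERD10} to conclude that in an amorphic association scheme with at least three classes each graph $(X,\tilde R_i)$ is a \emph{connected} strongly regular graph. Connectivity, together with Lemma~\ref{regular} and Theorem~\ref{three eigenvalue}, then gives $a_i,b_i<k_i$ immediately. In particular the paper never appeals to the standing hypothesis that $\tilde{\mathfrak X}$ is a symmetrisation; the entire weight is carried by the external structure theorem on amorphic schemes.

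Your plan avoids that theorem and instead tries to manufacture a contradiction from the fission data via Theorem~\ref{fission scheme} and a case split $i=d$ versus $i\neq d$. This is far more elaborate, and---as you yourself concede---it is a sketch rather than a proof: in both branches the decisive step is only asserted (``should yield a numerical incompatibility'', ``should force $a\geq 0$'') with no supporting computation. There is also a concrete error: the claim that the clique partition $\pi$ forces $p_{d,d}^{l}=p_{d,d+1}^{l}=0$ for all $l\notin\{0,i,d,d+1\}$ is unfounded. When $i\neq d$ the relation $R_d$ runs between $\pi$-blocks, so a two-step $R_d$-walk from $x$ to $y$ can land with $(x,y)\in R_l$ for any $l\neq 0$, and nothing in the block structure forces those counts to vanish. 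Without that vanishing, the subsequent analysis of $A_dA_{d+1}$ has no grip on the discriminant $a$. The one-line route through \cite[Theorem~1]{ERD10} bypasses all of this.
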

\begin{proof}
By \cite[Theorem 1]{ERD10}, $(X,\tilde{R}_i)$ is a connected strongly regular graph for $1\leq i\leq d$. By \eqref{amorphic}, Lemma \ref{regular}, and Theorem \ref{three eigenvalue}, we have $a_i,b_i<k_i$.
\end{proof}

\begin{prop}\label{E3 notin}
Let $d=3$. If $\tilde{E}_3\notin\{E_1,E_2,E_3,E_4\}$, then $(X,R_i\cup R_3)$ has $5$ distinct eigenvalues for $i\in\{1,2\}$.
\end{prop}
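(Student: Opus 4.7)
The plan is to write down the character table of $\mathfrak X$ explicitly and then compute the spectrum of $A_i+A_3$ by summing two of its columns. First, since $\tilde E_0=E_0=J/|X|$ is always a primitive idempotent of $\mathfrak X$ and $\tilde{\mathfrak X}$ has one fewer primitive idempotent than $\mathfrak X$, exactly one $\tilde E_r$ with $r\geq 1$ splits as a sum of two primitive idempotents of $\mathfrak X$. The hypothesis $\tilde E_3\notin\{E_1,E_2,E_3,E_4\}$ forces $r=3$, so $\tilde E_1,\tilde E_2\in\{E_1,E_2,E_3,E_4\}$; after relabeling the primitive idempotents of $\mathfrak X$, I may assume $\tilde E_j=E_j$ for $j=0,1,2$ and $\tilde E_3=E_3+E_4$.

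Next, applying the argument of Theorem \ref{fission scheme} with the splitting idempotent being $\tilde E_3$ rather than $\tilde E_1$, and using the form \eqref{amorphic} of $\tilde P$, the character table of $\mathfrak X$ takes the form
\begin{align*}
P=\begin{pmatrix}
1 & k_1 & k_2 & k_3/2 & k_3/2\\
1 & b_1 & a_2 & a_3/2 & a_3/2\\
1 & a_1 & b_2 & a_3/2 & a_3/2\\
1 & a_1 & a_2 & \rho & \bar\rho\\
1 & a_1 & a_2 & \bar\rho & \rho
\end{pmatrix},
\end{align*}
where $\rho=(b_3+\sqrt{a})/2$ for some $a<0$; in particular $\rho$ is non-real and $\rho\neq\bar\rho$.

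For $i\in\{1,2\}$, the eigenvalues of the adjacency matrix $A_i+A_3$ of the digraph $(X,R_i\cup R_3)$ are read off by summing columns $i$ and $3$ of $P$, giving the set
\begin{align*}
\{\,k_i+k_3/2,\ b_i+a_3/2,\ a_i+a_3/2,\ a_i+\rho,\ a_i+\bar\rho\,\}.
\end{align*}
Since $\rho$ and $\bar\rho$ are non-real complex conjugates, $a_i+\rho$ and $a_i+\bar\rho$ are distinct from one another and from every real value in the list. Among the three real values, $b_i+a_3/2\neq a_i+a_3/2$ because $a_i\neq b_i$; moreover, Lemma \ref{two disconnected} gives $a_i,b_i<k_i$ and $a_3<k_3$, whence $k_i+k_3/2$ strictly exceeds both. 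Therefore all five entries are distinct, and $(X,R_i\cup R_3)$ has exactly five distinct eigenvalues.

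The main obstacle is the bookkeeping verification that the character table of $\mathfrak X$ really has the displayed form when $\tilde E_3$ (rather than $\tilde E_1$) occupies the splitting position; this amounts to reproving Theorem \ref{fission scheme} with a different row designated as splitting, but the argument is formally identical. Once this is in place, the eigenvalue count is immediate from Lemma \ref{two disconnected}.
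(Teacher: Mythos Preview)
Your proof is correct and follows essentially the same approach as the paper's: both relabel so that $\tilde E_3=E_3+E_4$, write down the character table of $\mathfrak X$ via Theorem~\ref{fission scheme} (applied with the splitting row permuted to position~3), read off the five eigenvalues of $A_i+A_3$, and separate the two non-real values from the three real ones using $a_i\neq b_i$ together with Lemma~\ref{two disconnected}. Your remark about the bookkeeping is apt but harmless---the relabeling of primitive idempotents is exactly what the paper also does tacitly when invoking Theorem~\ref{fission scheme} in this rearranged form.
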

\begin{proof}
By Theorem \ref{Bannai-Muzychuk}, we may assume $\tilde{E}_3=E_3+E_4$ and $\tilde{E}_h=E_h$ for $0\leq h\leq 2$. In view of \eqref{amorphic} and Theorem \ref{fission scheme}, the character table of $\mathfrak{X}$ has the form
\begin{align}
P =\left(\begin{array}{ccccc}
1 & k_1 & k_2 & k_3/2 & k_3/2 \\
1 & b_1 & a_2 & a_3/2 & a_3/2\\
1 & a_1 & b_2 & a_3/2 & a_3/2\\
1 & a_1 & a_2 & (b_3+\sqrt{a})/2 & (b_3-\sqrt{a})/2\\
1 & a_1 & a_2 & (b_3-\sqrt{a})/2 & (b_3+\sqrt{a})/2
\end{array}\right)\nonumber
\end{align}
with $a<0$. Then eigenvalues of $(X,R_i\cup R_3)$ are $k_i+k_3/2$, $a_i+a_3/2$, $b_i+a_3/2$, and $(2a_i+b_3\pm\sqrt{a})/2$. Since $a_i\neq b_i$, from Lemma \ref{two disconnected}, $(X,R_i\cup R_3)$ has $5$ distinct eigenvalues.
%
\end{proof}

\begin{prop}\label{E3 in}
Suppose $\tilde{E}_d\in\{E_1,E_2,\ldots,E_{d+1}\}$. The following hold.
\begin{enumerate}
\item\label{E3 in-1} If $d=3$, then $(X,R_i\cup R_3)$ has $5$ distinct eigenvalues for all $i\in\{1,2,3\}$.

\item\label{E3 in-2} If $d=4$, then $(X,R_i\cup R_4)$ has $6$ distinct eigenvalues for all $i\in\{2,3\}$.
\end{enumerate}
\end{prop}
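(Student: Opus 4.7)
The plan is to pin down the splitting pattern via Theorem \ref{Bannai-Muzychuk} and then compute eigenvalues directly from the resulting character table of $\mathfrak{X}$. Since $\tilde{E}_d$ is assumed to be a primitive idempotent of $\mathfrak{X}$, the unique nontrivial fusion block lies over some $\tilde{E}_l$ with $1\le l\le d-1$. The amorphic table \eqref{amorphic} is invariant under any joint permutation of its row/column indices $\{1,\ldots,d-1\}$ (the $\tilde{E}_d$ row and $\tilde{R}_d$ column are fixed), so after such a relabelling we may assume $\tilde{E}_1=E_1+E_2$ and $\tilde{E}_i=E_{i+1}$ for $2\le i\le d$. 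Theorem \ref{fission scheme} then writes the character table $P$ of $\mathfrak{X}$ explicitly: rows $E_1,E_2$ carry a conjugate pair $\rho=(a_d+\sqrt{a})/2,\bar{\rho}$ with $a<0$ in the last two columns, and every other row is the corresponding row of $\tilde{P}$ with its last entry halved and duplicated into the two new columns.

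Next I would read off the eigenvalues of $A_i+A_d$ as the sums of columns $i$ and $d$ of $P$, separately in each of the four situations $d=3,\ i\in\{1,2,3\}$ and $d=4,\ i\in\{2,3\}$. In every situation two of the $d+2$ row sums take the form $c+\rho$ and $c+\bar{\rho}$ for a common real constant $c$; since $a<0$ these are non-real complex conjugates and so are automatically distinct from each other and from every remaining (real) row sum. Pairwise distinctness of the real row sums is then verified using three tools: Lemma \ref{two disconnected} (which gives $a_j,b_j<k_j$, so the valency $k_i+k_d/2$ strictly exceeds every other real entry), the amorphic condition $a_j\neq b_j$ (which separates any two rows differing in a single $a/b$ swap inside one of the chosen columns), and Lemma \ref{a b} in the rearranged form $b_j-a_j=b_d-a_d$ (which rules out the single potential coincidence $b_j+a_d/2=a_j+b_d/2$, as it would force the excluded equality $b_d=a_d$).

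The main obstacle is structural rather than computational: one must confirm that the chosen column $i$ carries a $b$-entry on enough distinct rows to break the symmetry in the column-$d$ entries. With the split at $\tilde{E}_1$, any two rows $\tilde{E}_j,\tilde{E}_{j'}$ with $j,j'\notin\{1,i,d\}$ would both have entry $a_i$ in column $i$ and $a_d/2$ in column $d$, forcing their row sums to coincide. This happens exactly when $d=4$ and $i=1$ (the rows $\tilde{E}_2,\tilde{E}_3$ both contribute $a_1+a_d/2$), and this is precisely why \ref{E3 in-2} restricts $i$ to $\{2,3\}$. For every $i$ left in the stated range at most one such ``generic'' row survives, so the three tools listed above suffice to separate all real row sums, giving $d+2$ distinct eigenvalues and thereby proving both \ref{E3 in-1} and \ref{E3 in-2}.
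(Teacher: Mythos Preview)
Your proposal is correct and follows essentially the same approach as the paper: relabel via the symmetry of the amorphic table (the paper invokes Theorem~\ref{Bannai-Muzychuk}) to put the split at $\tilde{E}_1$, write down $P$ via Theorem~\ref{fission scheme}, and then separate the row sums using the non-reality of the conjugate pair together with Lemma~\ref{two disconnected}, the condition $a_j\neq b_j$, and Lemma~\ref{a b}. Your structural explanation of why $i=1$ fails when $d=4$ matches exactly the case the paper excludes; the only slip is the count ``four situations'' (there are five), which is harmless.
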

\begin{proof}
In view of \eqref{amorphic} and Theorem \ref{Bannai-Muzychuk}, we may assume $\tilde{E}_1=E_1+E_2$ and $\tilde{E}_h=E_{h+1}$ for $2\leq h\leq d$. By Theorem \ref{fission scheme}, the character table of $\mathfrak{X}$ has the form
\begin{align}\label{d=4}
P =\left(\begin{array}{ccccccc}
1 & k_1 & k_2 & \cdots & k_{d-1} & k_d/2 & k_d/2 \\
1 & b_1 & a_2 & \cdots & a_{d-1} & (a_d+\sqrt{a})/2 & (a_d-\sqrt{a})/2\\
1 & b_1 & a_2 & \cdots & a_{d-1} &(a_d-\sqrt{a})/2 & (a_d+\sqrt{a})/2\\
1 & a_1 & b_2 & \cdots & a_{d-1} & a_d/2 & a_d/2\\
\vdots &\vdots & \vdots & \ddots  & \vdots & \vdots &  \vdots\\
1 & a_1 & a_2 & \cdots & b_{d-1} & a_d/2 & a_d/2\\
1 & a_1 & a_2 & \cdots & a_{d-1} & b_d/2 & b_d/2
\end{array}\right)
\end{align}
with $a<0$. To prove this proposition, we need to consider the cases $d=3$ and $d=4$.

\textbf{Case 1.} $d=3$.

Since $a_3\neq b_3$, from \eqref{d=4} and Lemma \ref{two disconnected},  all distinct eigenvalues of $(X,R_3)$ are $k_3/2$, $(a_3\pm\sqrt{a})/2$, $a_3/2$, and $b_3/2$.

By \eqref{d=4}, $(X,R_1\cup R_3)$ has eigenvalues $k_1+k_3/2$, $(2b_1+a_3\pm\sqrt{a})/2$, $a_1+a_3/2$, and $a_1+b_3/2$. Since $a_3\neq b_3$, from Lemma \ref{two disconnected}, $(X,R_1\cup R_3)$ has $5$ distinct eigenvalues.

By \eqref{d=4}, $(X,R_2\cup R_3)$ has eigenvalues $k_2+k_3/2$, $(2a_2+a_3\pm\sqrt{a})/2$, $b_2+a_3/2$, and $a_2+b_3/2$. Since $a_3\neq b_3$ and $b_2+a_3=a_2+b_3$ from Lemma \ref{a b}, we have $b_2+a_3/2\neq a_2+b_3/2$. By Lemma \ref{two disconnected}, $(X,R_2\cup R_3)$ has $5$ distinct eigenvalues. Thus, (i) is valid.

\textbf{Case 2.} $d=4$.

By \eqref{d=4}, $(X,R_i\cup R_4)$ has eigenvalues $k_i+k_4/2$, $a_i+a_4/2$, $b_i+a_4/2$, $a_i+b_4/2$, and $(2a_i+a_4\pm\sqrt{a})/2$. Since $a_h\neq b_h$ for $1\leq h\leq d$ and $b_i+a_4=a_i+b_4$ from Lemma \ref{a b}, $a_i+a_4/2$, $b_i+a_4/2$, and $a_i+b_4/2$ are distinct. By Lemma \ref{two disconnected}, $(X,R_i\cup R_4)$ has $6$ distinct eigenvalues. Thus, (ii) is also valid.
\end{proof}

Now we are ready to prove Theorem \ref{main-general}.

\begin{proof}[Proof of Theorem \ref{main-general}]
If $d=1$, from Theorems \ref{Bannai-Muzychuk} and \ref{fission scheme}, then the graph $(X,R_j)$ has exactly $3$ distinct eigenvalues for $j\in\{1,2\}$, which implies that $(X,R_j)$ generates $\mathfrak{X}$ by Theorem \ref{generates}. Then the sufficiency of \ref{main-general-1}, and \ref{main-general-2} are both valid from \cite[Theorem 3.8]{GM}, Propositions \ref{E3 notin}, \ref{E3 in}, and Theorem \ref{generates}.

It suffices to prove the necessity of \ref{main-general-1}. Let $\Gamma$ be a digraph generating $\mathfrak{X}$. Since the adjacency matrix of $\Gamma$ belongs to the Bose-Mesner algebra of $\mathfrak{X}$, there exists a proper subset $\Lambda$ of $\{1,2,\ldots,d+1\}$ such that $\Gamma=(X,R_{\Lambda})$. Assume the contrary, namely, $d\geq5$, or $d=4$ and $\tilde{E}_4\notin\{E_1,E_2,E_3,E_4\}$. Let $V_h=E_h\mathbb{C}^{|X|}$ for $0\leq h\leq d+1$.

Suppose $\tilde{E}_d\notin\{E_1,E_2,\ldots,E_{d+1}\}$. By Theorem \ref{Bannai-Muzychuk}, without loss of generality, we may assume $\tilde{E}_{d}=E_d+E_{d+1}$ and $\tilde{E}_h=E_h$ for $0\leq i\leq d-1$. Since $d\geq4$, from \eqref{amorphic} and Theorem \ref{fission scheme}, the eigenvalues of $A_{d}$ on the eigenspaces $V_1$, $V_2$, and $V_3$ are all $a_d/2$, contrary to Lemma \ref{contradiction}. Then, $d\geq5$ and $\tilde{E}_d\in\{E_1,E_2,\ldots,E_{d+1}\}$.

By Theorem \ref{Bannai-Muzychuk}, we may assume $\tilde{E}_{1}=E_1+E_2$ and $\tilde{E}_h=E_{h+1}$ for $2\leq i\leq d$. Since $d\geq5$, from \eqref{amorphic} and Theorem \ref{fission scheme}, the eigenvalues of $A_{d}$ on the eigenspaces $V_3$, $V_4$, and $V_5$ are all $a_d/2$, contrary to Lemma \ref{contradiction}. Thus, the necessity of \ref{main-general-1} is also valid.
\end{proof}

\section{Proof of Theorem \ref{main}}

In this section, we always assume that $\mathfrak{X}=(X,\{R_0,R_1,R_2,R_3,R_4\})$ is an association scheme with $R_3^{\top}=R_4$. By a theorem of Higman \cite{Hig75}, association schemes with at most four classes are commutative. Next, we divide this section into two subsections according to whether $\mathfrak{X}$ is skew-symmetric or not.

\subsection{$\mathfrak{X}$ is skew-symmetric}

Since $\mathfrak{X}$ is skew-symmetric, we have $R_1^{\top}=R_2$. At first, we recall the character table of $\mathfrak{X}$.

\begin{thm} \label{t:rs20}
{\rm(\cite[Theorem 5]{Ma10})}~Let $\tilde{\mathfrak{X}} =(X,\{R_0, R_1\cup R_2 , R_3\cup R_4\})$
be the symmetrization of $\mathfrak{X}$ and
\begin{align}
\tilde{P} = \left(\begin{array}{ccc}
1 & k_1 & k_2 \\
1  & r_1 & t_1 \\
1 & r_2 & t_2
\end{array}\right)
\begin{array}{c} 1 \\ m_1 \\ m_2 \end{array}\label{sym character table}
\end{align}
the character table of  $\tilde{\mathfrak{X}}$, where $1, m_1$, and $m_2$ are the multiplicities of the corresponding row eigenvalues. Then the character table of $\mathfrak{X}$ has the following form$:$
$$
P= \left(\begin{array}{ccccc}
      1 & k_1/2  & k_1/2 & k_2/2 & k_2/2\\
      1  & \rho  & \bar{\rho} & \tau & \bar{\tau} \\
	 1 & \sigma  & \bar{\sigma} & \omega & \bar{\omega} \\
	 1 & \bar{\sigma} & \sigma & \bar{\omega} & \omega \\
     1 & \bar{\rho}  & \rho & \bar{\tau} & \tau
      \end{array}\right)
 \begin{array} {c} 1 \\ m_1/2 \\ m_1/2 \\ m_2/2 \\ m_2/2  \end{array}.
$$
Let $n=k_1+k_2+1$. The entries $\rho,\omega, \tau,$ and $\sigma$  are one of the three cases:
\begin{enumerate}
\item \label{t:rs20i}
 $ \rho = r_1/2, \
   \sigma=  (r_2 + \sqrt{-b})/2,\
   \tau = (t_1 + \sqrt{-z})/2, \
   \omega =  t_2/2,
$
where $  b = {nk_1}/{m_2},  z = {nk_2}/{m_1}.$
 \item  \label{t:rs20ii}
 $
 \rho =  \left( r_1 + \sqrt{-y}\right)/2, \
  \sigma = {r_2}/{2}, \
\tau = {t_1}/{2}, \
\omega =\left( t_2 + \sqrt{-c}\right)/2,
$
where
$ y = {nk_1}/{m_1},  c ={nk_2}/{m_2}.$
\item \label{t:rs20iii}
$ \rho = (r_1 + \sqrt{-y})/2, \
   \tau =  (t_1 + \sqrt{-z})/2, \
   \sigma=  (r_2 + \sqrt{-b})/2, \
   \omega =  (t_2 - \sqrt{-c})/2,
$
where all of $b, c, y,z$ are positive.
\end{enumerate}
\end{thm}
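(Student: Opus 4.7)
My plan is to reconstruct $P$ from $\tilde{P}$ by first determining how the primitive idempotents $\tilde{E}_1, \tilde{E}_2$ of $\tilde{\mathfrak{X}}$ refine into the five idempotents $E_0, E_1, E_2, E_3, E_4$ of $\mathfrak{X}$, then using $A_2 = A_1^{\top}$ and $A_4 = A_3^{\top}$ to read off the rows, and finally pinning down the free imaginary parts via the column-orthogonality relations in $P$. For the refinement, I apply Theorem~\ref{Bannai-Muzychuk} to the admissible partition $\{0\}, \{1,2\}, \{3,4\}$ and obtain a dual partition $\{0\}, \Lambda_1^{*}, \Lambda_2^{*}$ with $\tilde{E}_k = \sum_{j \in \Lambda_k^{*}} E_j$. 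Because every real symmetric matrix $\tilde{E}_k$ is fixed by entrywise complex conjugation, each $\Lambda_k^{*}$ is conjugation-closed; skew-symmetry $A_1 \neq A_1^{\top}$ rules out every $E_j$ being real. A rank count using $m_1 + m_2 = n - 1$ together with the four non-trivial idempotents leaves, after relabeling, the unique possibility $\Lambda_1^{*} = \{1, 4\}$ and $\Lambda_2^{*} = \{2, 3\}$ with $E_4 = \overline{E_1}$, $E_3 = \overline{E_2}$, and the two conjugate pairs of equal ranks $m_1/2$ and $m_2/2$. This already accounts for the first row of $P$ (via $|R_1| = |R_2|$ and $|R_3| = |R_4|$) and the multiplicity column.

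Setting $\rho = p_1(1), \sigma = p_1(2), \tau = p_3(1), \omega = p_3(2)$, the reality of $A_1$ combined with $A_2 = A_1^{\top}$ gives $p_2(j) = \overline{p_1(j)}$ for every $j$, and similarly $p_4(j) = \overline{p_3(j)}$. Since complex conjugation carries $V_1$ onto $V_4$ and $V_2$ onto $V_3$, the rows of $P$ for $E_3$ and $E_4$ are the entrywise conjugates of the rows for $E_2$ and $E_1$, which yields $P$ in the displayed shape. Reading the eigenvalue of $\tilde{A}_1 = A_1 + A_2$ on $V_1, V_2$ (respectively of $\tilde{A}_2 = A_3 + A_4$ on $V_1, V_2$) gives $\rho + \bar\rho = r_1, \sigma + \bar\sigma = r_2, \tau + \bar\tau = t_1, \omega + \bar\omega = t_2$, fixing the real parts at $r_1/2, r_2/2, t_1/2, t_2/2$.

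To pin down the imaginary parts, write $\rho = r_1/2 + i\alpha, \sigma = r_2/2 + i\beta, \tau = t_1/2 + i\gamma, \omega = t_2/2 + i\delta$ with $\alpha, \beta, \gamma, \delta \in \mathbb{R}$. Applying the column-orthogonality relation $\sum_j m_j^{\mathfrak{X}} p_i(j) \overline{p_l(j)} = n k_i^{\mathfrak{X}} \delta_{il}$ for the three pairs $(i, l) \in \{(1, 2), (3, 4), (1, 4)\}$ and simplifying using the corresponding identities for $\tilde{P}$, I obtain
\[
m_1 \alpha^2 + m_2 \beta^2 = \frac{n k_1}{4}, \qquad m_1 \gamma^2 + m_2 \delta^2 = \frac{n k_2}{4}, \qquad m_1 \alpha \gamma + m_2 \beta \delta = 0.
\]
The first two forbid $\alpha = \beta = 0$ and $\gamma = \delta = 0$; the distinctness of $E_1, E_4$ and of $E_2, E_3$ forbids $\alpha = \gamma = 0$ and $\beta = \delta = 0$; the third then rules out every pattern in which exactly one of $\alpha, \beta, \gamma, \delta$ vanishes (for example $\alpha = 0$ alone forces $\beta\delta = 0$, contradicting the previous restrictions). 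The only admissible patterns are therefore $\alpha = \delta = 0$ (case (i)), $\beta = \gamma = 0$ (case (ii)), and no vanishing (case (iii)). In the first two cases the surviving equations uniquely give $b = nk_1/m_2, z = nk_2/m_1$ and $y = nk_1/m_1, c = nk_2/m_2$; in case (iii) all four scalars are nonzero, and the third relation forces $\alpha, \beta, \gamma$ of one sign and $\delta$ of the opposite sign, producing the stated $-\sqrt{-c}$ on $\omega$.

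The main obstacle I anticipate is executing the column-orthogonality computations cleanly — tracking the halved multiplicities $m_1/2, m_2/2$ and the various conjugation relations among $p_i(j), p_i(\bar{j})$ — and then verifying precisely that the third relation eliminates the four spurious single-vanishing patterns exactly as sketched.
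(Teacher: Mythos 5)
The paper itself gives no proof of this statement: it is quoted verbatim from \cite[Theorem 5]{Ma10}, so there is no in-paper argument to compare against, and your proposal has to be judged on its own. Its skeleton is sound and, I believe, essentially the natural proof: Bannai--Muzychuk splits $\tilde{E}_1,\tilde{E}_2$ into conjugate pairs of primitive idempotents, conjugation and $A_2=A_1^{\top}$, $A_4=A_3^{\top}$ give the displayed row pattern of $P$, the fusion eigenvalues give the real parts $r_1/2,r_2/2,t_1/2,t_2/2$, and the weighted orthogonality $\sum_j m_j\,p_i(j)\overline{p_l(j)}=nk_i\delta_{il}$ applied to the pairs $(1,2),(3,4),(1,4)$ does yield
\[
m_1\alpha^2+m_2\beta^2=\tfrac{nk_1}{4},\qquad m_1\gamma^2+m_2\delta^2=\tfrac{nk_2}{4},\qquad m_1\alpha\gamma+m_2\beta\delta=0,
\]
which I have checked against the $2$-class orthogonality relations for $\tilde{P}$. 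A point in your favor: your multiplicity assignment ($m_1/2$ for the two $\rho$-rows, $m_2/2$ for the two $\sigma$-rows) is the internally consistent one; the column $1,m_1/2,m_1/2,m_2/2,m_2/2$ printed in the theorem cannot be right as displayed, since rows $1,4$ and rows $2,3$ of the printed $P$ are conjugate and conjugate idempotents have equal rank, so that column would force $m_1=m_2$. This appears to be a transcription slip in the statement.

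Two steps, however, have genuine gaps. First, the assertion that skew-symmetry ``rules out every $E_j$ being real'' does not follow from what you set up. Nonsymmetry yields \emph{some} non-real idempotent, but conjugation-closure of the dual partition plus rank counting do not exclude the configuration $\tilde{E}_1=E_a+\overline{E_a}+E_c$, $\tilde{E}_2=E_d$ with $E_c,E_d$ real; the Bannai--Muzychuk constant-row-sum conditions are satisfiable there as well. What you need is the standard fact that the number of real primitive idempotents equals the number of symmetric relations (complex conjugation permutes the rows of $P$ via $E_j\mapsto\overline{E_j}$ and the columns via $A_i\mapsto A_i^{\top}$, so these two permutations are similar via $P$ and have equally many fixed points); for a skew-symmetric scheme this number is $1$, i.e.\ only $E_0$ is real. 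Second, in case (iii) you claim the relation $m_1\alpha\gamma+m_2\beta\delta=0$ ``forces $\alpha,\beta,\gamma$ of one sign and $\delta$ of the opposite sign.'' It forces only $\operatorname{sign}(\alpha\gamma)=-\operatorname{sign}(\beta\delta)$, and the relabelings you allow (swapping $E_1\leftrightarrow E_4$ negates $(\alpha,\gamma)$; swapping $E_2\leftrightarrow E_3$ negates $(\beta,\delta)$) leave both products invariant. Hence the sub-case $\alpha\gamma<0<\beta\delta$ survives and normalizes to a minus sign on $\tau$ (or $\rho$) rather than on $\omega$, which is not the printed case (iii). The repair is easy but must be stated: one also has the freedom to exchange the labels of $R_3$ and $R_4$ (equivalently, columns $3$ and $4$ of $P$), which negates $(\gamma,\delta)$ and carries that sub-case onto the printed one. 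With these two repairs your argument is complete.
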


In this subsection, we refer to the three character tables in the above theorem as type 1, 2, and  3, respectively. 

\begin{prop}\label{skew-1}
If $\mathfrak{X}$ has the character table of type $h$ for some $h\in\{1,2\}$, then the digraph $(X,R_i\cup R_{i+2})$ has $5$ distinct eigenvalues for $i\in\{1,2\}$.
\end{prop}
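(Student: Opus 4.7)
The plan is to apply Theorem~\ref{generates}: since $\mathfrak{X}$ has five classes, proving that the digraph $(X,R_i\cup R_{i+2})$ generates $\mathfrak{X}$ is equivalent to showing that its adjacency matrix $A_i+A_{i+2}$ has exactly $5$ distinct eigenvalues. I would obtain these eigenvalues as the row-by-row sums of columns $i$ and $i+2$ of the character table $P$ provided by Theorem~\ref{t:rs20}. A first simplification comes from Lemma~\ref{row} applied to the symmetrization $\tilde{P}$, which yields $r_1+t_1=r_2+t_2=-1$, collapsing the real part of every nontrivial sum to $-\tfrac12$.

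Carrying this out in Type~1 for $i=1$, the five eigenvalues of $A_1+A_3$ become
\[
\frac{n-1}{2},\qquad -\frac{1}{2}\pm\frac{i\sqrt{z}}{2},\qquad -\frac{1}{2}\pm\frac{i\sqrt{b}}{2},
\]
with $b=nk_1/m_2>0$ and $z=nk_2/m_1>0$. The valency $(n-1)/2$ is real and thus distinct from the four nonreal values; each complex-conjugate pair is internally distinct; and the two pairs are mutually distinct precisely when $b\neq z$. For $i=2$, the relation $A_2+A_4=(A_1+A_3)^{\top}$ shows the spectrum is just the complex conjugate of the previous case and therefore has the same number of distinct entries. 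Type~2 is treated in a parallel fashion: swapping the real/complex roles of $(\rho,\omega)$ and $(\sigma,\tau)$, the analogous computation produces eigenvalues $-\tfrac12\pm \tfrac{i\sqrt{y}}{2}$ and $-\tfrac12\pm \tfrac{i\sqrt{c}}{2}$ with $y=nk_1/m_1$ and $c=nk_2/m_2$, so the claim reduces to $y\neq c$.

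The crux, and what I expect to be the main obstacle, is therefore to establish the strict inequalities $b\neq z$ in Type~1 and $y\neq c$ in Type~2. My plan is to argue by contradiction via the Bannai--Muzychuk criterion (Theorem~\ref{Bannai-Muzychuk}): were $b=z$ in Type~1, then the candidate partitions $\{\{0\},\{1,3\},\{2,4\}\}$ of relation indices and $\{\{0\},\{1,2\},\{3,4\}\}$ of eigenspace indices would satisfy the constant block-row-sum condition in $P$, producing a $2$-class fusion of $\mathfrak{X}$. I would then combine this with the defining realness of $\rho$ and $\omega$ in Type~1 (which forces $A_1$ to act as a real scalar on $\tilde{E}_1\mathbb{C}^{|X|}$ and $A_3$ as a real scalar on $\tilde{E}_2\mathbb{C}^{|X|}$) to derive a contradiction, either showing that $\mathfrak{X}$ was actually reducible or that its character table was of Type~3 rather than Type~1. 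The symmetric contradiction handles $y\neq c$ in Type~2. Finding a clean derivation of this exclusion, rather than tacitly invoking non-degeneracy, is the delicate step of the proof.
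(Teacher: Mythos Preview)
Your reduction is correct and matches the paper exactly: using Lemma~\ref{row} on $\tilde P$ collapses the real parts to $-\tfrac12$, and the five eigenvalues of $A_i+A_{i+2}$ become $(n-1)/2$ together with the two conjugate pairs $-\tfrac12\pm\tfrac{i}{2}\sqrt{b},\,-\tfrac12\pm\tfrac{i}{2}\sqrt{z}$ (Type~1) or $-\tfrac12\pm\tfrac{i}{2}\sqrt{y},\,-\tfrac12\pm\tfrac{i}{2}\sqrt{c}$ (Type~2). So the whole problem is indeed the inequalities $b\neq z$ and $y\neq c$, equivalently $k_1m_1\neq k_2m_2$ in Type~1 and $k_1m_2\neq k_2m_1$ in Type~2. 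The paper reaches the same reduction.

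Where your plan has a genuine gap is the final step. Your Bannai--Muzychuk observation is correct: if, say, $b=z$ in Type~1 then the partition $\{0\},\{1,3\},\{2,4\}$ (with dual $\{0\},\{1,2\},\{3,4\}$) passes the block test, so $(X,\{R_0,R_1\cup R_3,R_2\cup R_4\})$ is a nonsymmetric $2$-class fusion. But that is not by itself a contradiction: nonsymmetric $2$-class schemes (doubly regular tournaments) exist, and nothing you have written explains why such a fusion cannot coexist with the Type~$h$ character table. Your two proposed outcomes --- ``$\mathfrak{X}$ is reducible'' or ``$\mathfrak{X}$ is actually Type~3'' --- are not established by the mere existence of this fusion; you would need a further structural argument that you have not supplied.

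The paper closes the gap by a completely different, arithmetic route. Assuming $k_lm_2=k_hm_1$, it expresses $n,k_2,m_1,m_2$ in terms of $\mu,r_1,r_2$ via the standard strongly-regular-graph identities \eqref{k2}--\eqref{mulitiplicity} (choosing $(X,R_1\cup R_2)$ connected), substitutes, and factors to force $\mu=r_h^2+r_h$, hence $\lambda=r_h^2+2r_h+r_l$. It then invokes \cite[Theorem~2.2]{JM11} for skew-symmetric $4$-class schemes, which gives $p_{1,3}^1-p_{1,3}^2=t_l/2$ and $p_{1,3}^3-p_{1,3}^4=r_h/2$, so $t_l$ and $r_h$ are even and (since $1+r_l+t_l=0$) $r_l$ is odd; but the same reference yields $p_{1,1}^1=(\lambda+r_l)/4=(r_h(r_h+2)+2r_l)/4$, and evenness of $r_h$ then forces $r_l$ even, a contradiction. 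So the paper's key external input is the intersection-number formulae of \cite{JM11}, not a fusion argument. If you want to salvage your approach you would need an independent reason why the extra $2$-class fusion is incompatible with Type~1 or Type~2; absent that, the argument is incomplete.
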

\begin{proof}
Let $\tilde{\mathfrak{X}}=(X,\{R_0,R_1\cup R_2,R_3\cup R_4\})$ be the symmetrization of $\mathfrak{X}$ having the character table $\tilde{P}$ as \eqref{sym character table}. Since both the graphs $(X,R_1\cup R_2)$ and $(X,R_3\cup R_4)$ are strongly regular, from Lemma \ref{row} and Proposition \ref{disconnected eigen}, we may assume that $(X,R_1\cup R_2)$ is connected. Let $(X,R_1\cup R_2)$ have parameters $(n,k,\lambda,\mu)$ with $k=k_1$. Since $(X,R_1\cup R_2)$ has eigenvalues $k,r_1,r_2$ from \eqref{sym character table}, one has $k=\mu-r_1r_2$ by \eqref{mu}. In view of \eqref{lambda}, we get $\lambda=\mu+r_1+r_2$. Since $(X,R_1\cup R_2)$ is connected, one obtains $\mu>0$. By substituting $k=\mu-r_1r_2$ and $\lambda=\mu+r_1+r_2$ into \eqref{k2}, we have
\begin{align}\label{n}
n=\frac{(\mu-r_1r_2-r_2)(\mu-r_1r_2-r_1)}{\mu}.
\end{align}
This implies
\begin{align}\label{k_2}
k_2=\frac{(r_2+1)(r_1+1)(r_1r_2-\mu)}{\mu}.
\end{align}
By substituting \eqref{n}, $k=\mu-r_1r_2$ and $\lambda=\mu+r_1+r_2$ into \eqref{mulitiplicity}, one gets
\begin{align}
m_1&=\frac{(r_2+1)(\mu-r_1r_2-r_2)(\mu-r_1r_2)}{\mu(r_2-r_1)},\label{m1}\\
m_2&=\frac{(r_1+1)(\mu-r_1r_2-r_1)(\mu-r_1r_2)}{\mu(r_1-r_2)}.\label{m2}
\end{align}

By Theorem \ref{t:rs20} \ref{t:rs20i} and \ref{t:rs20ii}, the digraph $(X,R_i\cup R_{i+2})$ has eigenvalues $(k_1+k_2)/2, \rho+\tau,\bar{\rho}+\bar{\tau},\sigma+\omega$, and $\bar{\sigma}+\bar{\omega}$ for $i\in\{1,2\}$, where $\rho+\tau=(r_1+t_1+\sqrt{-nk_l/m_1})/2$ and $\sigma+\omega=(r_2+t_2+\sqrt{-nk_h/m_2})/2$ with $\{l,h\}=\{1,2\}$. Since all of $n,k_1,k_2,m_1,m_2$ are positive, from Theorem \ref{generates}, it suffices to show that $\rho+\tau\neq\sigma+\omega$.

Assume the contrary, namely, $\rho+\tau=\sigma+\omega$. It follows that
\begin{align}
(r_1+t_1+\sqrt{-nk_l/m_1})/2=\rho+\tau=\sigma+\tau=(r_2+t_2+\sqrt{-nk_h/m_2})/2,\nonumber
\end{align}
and so $k_lm_2=k_hm_1$. By substituting $k=\mu-r_1r_2$ and \eqref{k_2}--\eqref{m2} into the latter equation, one gets
\begin{align}\label{case 1}
\frac{(\mu-r_1r_2)^2(r_l+1)(r_h^2+r_h-\mu)(\mu-r_1r_2-r_1-r_2-1)}{\mu^2(r_1-r_2)}=0.
\end{align}
By Subsection 2.2 and Proposition \ref{disconnected eigen}, we obtain $r_1r_2\leq0$ and $-1\notin\{r_1,r_2\}$. Since $k_2>0$, from \eqref{mu} and \eqref{k_2}, one has $(r_2+1)(r_1+1)<0$. Equation \eqref{case 1} implies $\mu=r_h^2+r_h$. Since $\lambda=\mu+r_1+r_2$, we get $\lambda=r_h^2+2r_h+r_l$.

In view of \cite[Theorem 2.2 (i) and (ii)]{JM11} and Lemma \ref{row}, we get $p_{1,3}^1-p_{1,3}^2=t_l/2$ and $p_{1,3}^3-p_{1,3}^4=r_h/2$, which imply that both $t_l$ and $r_h$ are even. Since $1+r_l+t_l=0$ from Lemma \ref{row}, $r_l$ is odd. Since $\lambda=r_h^2+2r_h+r_l$, from \cite[Theorem 2.2 (i) and (ii)]{JM11} again, we obtain
\begin{align}
p_{1,1}^1=\frac{\lambda+r_l}{4}=\frac{r_h(r_h+2)+2r_l}{4}.\nonumber
\end{align}
The fact that $r_h$ is even implies $2\mid r_l$, a contradiction.
\end{proof}

\begin{prop}\label{skew-2}
If $\mathfrak{X}$ has the character table of type 3, then the digraph $(X,R_i)$ has $5$ distinct eigenvalues for $1\leq i\leq4$.
\end{prop}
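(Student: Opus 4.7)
The plan is to read off the eigenvalues of $A_i$ directly from column $i$ of the type~3 character table and verify that they form a $5$-element set. For $i\in\{1,2\}$ the column entries are $\{k_1/2,\rho,\bar\rho,\sigma,\bar\sigma\}$, and for $i\in\{3,4\}$ they are $\{k_2/2,\tau,\bar\tau,\omega,\bar\omega\}$; the whole question therefore reduces to separating these five complex numbers in each case.

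I would first exploit the hypothesis $b,c,y,z>0$ that characterizes type~3: it forces $\rho,\sigma,\tau,\omega$ each to be strictly non-real, which immediately gives $\rho\neq\bar\rho$, $\sigma\neq\bar\sigma$, $\tau\neq\bar\tau$, $\omega\neq\bar\omega$, and moreover separates the real valencies $k_1/2,k_2/2$ from the four non-real entries in their respective columns. So the only potential coincidences left are $\rho\in\{\sigma,\bar\sigma\}$ for the first two columns and $\tau\in\{\omega,\bar\omega\}$ for the last two.

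Writing $\rho=(r_1+\sqrt{-y})/2$ and $\sigma=(r_2+\sqrt{-b})/2$, the equality $\rho=\bar\sigma$ would require $\sqrt{-y}=-\sqrt{-b}$, which is impossible under the principal branch since $y,b>0$; while $\rho=\sigma$ would require $r_1=r_2$. The analogous analysis for $\tau$ and $\omega$ reduces to $t_1\neq t_2$. Hence the core technical point is the pair of inequalities $r_1\neq r_2$ and $t_1\neq t_2$.

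Both of these I would obtain directly from the symmetrization $\tilde{\mathfrak X}$, which is a genuine $2$-class symmetric scheme: its three primitive idempotents are distinct, so the rows of $\tilde P$ are pairwise distinct. Lemma~\ref{row} applied to each non-trivial row gives $t_i=-1-r_i$, so $r_1=r_2$ would force $t_1=t_2$ and make the second and third rows of $\tilde P$ coincide, a contradiction; the same argument yields $t_1\neq t_2$. With all five candidate eigenvalues in each column thereby separated, the proposition follows (and by Theorem~\ref{generates} each $(X,R_i)$ in fact generates $\mathfrak X$). I do not anticipate a serious obstacle here: once the strict positivity of $b,c,y,z$ in type~3 is in hand, the verification is essentially an inspection of the character table.
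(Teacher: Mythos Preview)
Your proposal is correct and follows essentially the same route as the paper's proof: read off the column of $P$ in type~3, use $b,c,y,z>0$ to separate the real valency from the four non-real entries and each conjugate pair internally, and then reduce the remaining possible coincidences to $r_1\neq r_2$ (resp.\ $t_1\neq t_2$), which you derive from Lemma~\ref{row} together with the nonsingularity of $\tilde P$. The paper phrases the last step as ``$\tilde P$ is a change-of-basis matrix, hence nonsingular'' rather than ``the rows are pairwise distinct,'' but the content is identical.
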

\begin{proof}
Let the symmetrization of $\mathfrak{X}$ have the character table $\tilde{P}$ as \eqref{sym character table}. By Theorem \ref{t:rs20} \ref{t:rs20iii}, the digraph $(X,R_i)$ with $i\in\{1,2\}$ has eigenvalues $k_1/2, (r_1\pm\sqrt{-y})/2$, and $(r_2\pm\sqrt{-b})/2$, and the digraph $(X,R_j)$ with $j\in\{3,4\}$ has eigenvalues $k_2/2, (t_1\pm\sqrt{-z})/2$, and $(t_2\pm\sqrt{-c})/2$, where all of $b,c,y,z$ are positive. By Theorem \ref{generates}, it suffices to show that $r_1\neq r_2$ and $t_1\neq t_2$.

Since $\tilde{P}$ is a change-of-basis matrix between the adjacency matrices of $\tilde{\mathfrak{X}}$ and the primitive idempotents of its Bose-Mesner algebra, $\tilde{P}$ is not singular. By Lemma \ref{row} and \eqref{sym character table}, one has $1+r_1+t_1=1+r_2+t_2=0$, which implies $r_1\neq r_2$ and $t_1\neq t_2$.
\end{proof}

\subsection{$\mathfrak{X}$ is not skew-symmetric}

Since $\mathfrak{X}$ is not skew-symmetric, we have $R_1^{\top}=R_{1}$ and $R_2^{\top}=R_2$. Let $\tilde{\mathfrak{X}}=(X,\{\tilde{R}_i\}_{i=0}^3)$ be its symmetrization with $\tilde{R}_3=R_3\cup R_4$ and $\tilde{R}_i=R_i$ for $0\leq i\leq 2$. Set $\Gamma^{(i)}=(X,\tilde{R}_i)$ for $1\leq i\leq3$. In \cite{ERD99}, van Dam completely classified symmetric $3$-class association schemes.

\begin{lemma}\label{sym 3-class}
{\rm (\cite[Section 7]{ERD99})} Suppose that $\tilde{\mathfrak{X}}$ is not amorphic. Then $\tilde{\mathfrak{X}}$ belongs to exactly one of the following categories:
\begin{enumerate}
\item\label{sym 3-class-2} $3$-class association schemes with at least one graph $\Gamma^{(i)}$ that is the disjoint union of $N>1$ ($N\in\mathbb{N}$) connected strongly regular graphs with the same parameters;

\item\label{sym 3-class-3} $3$-class association schemes with at least one graph $\Gamma^{(i)}$ having $4$ distinct eigenvalues.
\end{enumerate}
\end{lemma}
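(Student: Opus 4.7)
The plan is to prove the contrapositive of Lemma \ref{sym 3-class}: assuming $\tilde{\mathfrak{X}}$ lies in neither category (i) nor (ii), I would deduce that $\tilde{\mathfrak{X}}$ is amorphic. The starting point is that the Bose-Mesner algebra of $\tilde{\mathfrak{X}}$ is spanned by the four primitive idempotents $\tilde{E}_0,\tilde{E}_1,\tilde{E}_2,\tilde{E}_3$, so the minimal polynomial of each $\tilde{A}_i$ has degree at most $4$, and hence each $\Gamma^{(i)}$ has at most $4$ distinct eigenvalues. Negating (ii) then forces each $\Gamma^{(i)}$ to have at most $3$ distinct eigenvalues.

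Next I would eliminate the degenerate spectrum possibilities. A regular graph with only two distinct eigenvalues is by a standard argument a disjoint union of complete graphs of equal size, which either coincides with $\tilde R_1\cup\tilde R_2$ being a single complete graph (impossible, since then the three nontrivial relations could not partition $X\times X\setminus R_0$ into three nonempty parts with a commutative scheme structure) or is a disjoint union of $\geq 2$ such complete graphs --- the latter is case (i). So each $\Gamma^{(i)}$ has exactly three distinct eigenvalues $k_i>\theta_1^{(i)}>\theta_2^{(i)}$. If some $\Gamma^{(i)}$ is disconnected, then by Proposition \ref{multiplicity} its multiplicity of $k_i$ counts the components, each of which is a connected regular graph of valency $k_i$ whose eigenvalues lie in $\{k_i,\theta_1^{(i)},\theta_2^{(i)}\}$. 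By Theorem \ref{three eigenvalue} (and Proposition \ref{disconnected eigen} ruling out eigenvalue $-1$ in a connected SRG) each such component is either a connected strongly regular graph with the shared eigenvalues $k_i,\theta_1^{(i)},\theta_2^{(i)}$, forcing all components to share the same parameters by \eqref{lambda}--\eqref{mu}, or a complete graph $K_{k_i+1}$ making $\Gamma^{(i)}$ a disjoint union of equal complete graphs; either way we land in case (i), contradicting our assumption.

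It remains to treat the case in which every $\Gamma^{(i)}$ is connected with exactly three distinct eigenvalues, so by Theorem \ref{three eigenvalue} each $\Gamma^{(i)}$ is a connected strongly regular graph. Here I would invoke the classical result (due to Ito-Munemasa-Yamada, or equivalently \cite{ERD99}) that a symmetric $3$-class association scheme is amorphic if and only if each of its three nontrivial relations induces a strongly regular graph. One reproves this via the Bannai-Muzychuk criterion (Theorem \ref{Bannai-Muzychuk}): the only admissible partitions of $\{0,1,2,3\}$ one must still check are those merging two of the three nontrivial classes, and using the character table together with \eqref{lambda}, \eqref{mu} and Lemma \ref{row} (row sums are zero off the trivial row), one verifies that each such merge $\tilde A_i+\tilde A_j$ has constant row sum on an appropriate dual block, producing the desired $2$-class fusion scheme.

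The main obstacle is in the second paragraph, namely the passage from "$\Gamma^{(i)}$ has three eigenvalues and is disconnected" to "all components share the same SRG parameters". A component with a strict subset of $\{k_i,\theta_1^{(i)},\theta_2^{(i)}\}$ as its spectrum is a complete graph, and mixing complete components with proper SRG components would introduce the eigenvalue $-1$ into the SRG components, which by Proposition \ref{disconnected eigen} cannot happen for a connected strongly regular graph. So either all components are $K_{k_i+1}$ or all are proper SRGs with identical eigenvalue triple, and in the latter case \eqref{lambda}--\eqref{mu} determine $\lambda$ and $\mu$ from $(k_i,\theta_1^{(i)},\theta_2^{(i)})$, so the parameters coincide --- giving case (i) as required.
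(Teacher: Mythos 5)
The paper offers no proof of Lemma \ref{sym 3-class}: it is quoted verbatim from van Dam's classification \cite[Section 7]{ERD99}, so your attempt has to be judged on its own merits rather than against an in-paper argument. Your overall skeleton is the right one: pass to the contrapositive, note each $\Gamma^{(i)}$ has at most four distinct eigenvalues, reduce to the statement that all three relations are strongly regular, and finish with ``all three relations strongly regular $\Rightarrow$ amorphic'' (which for three classes is elementary, e.g.\ via Theorem \ref{Bannai-Muzychuk} or directly from $A^2=kI+\lambda A+\mu(J-I-A)$). Your fourth paragraph, ruling out a mixture of complete and non-complete components through Proposition \ref{disconnected eigen}, is also correct.

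The genuine gap is your repeated claim that a disjoint union of $N>1$ equal complete graphs ``is case \ref{sym 3-class-2}''. Under the paper's definition (Subsection 2.2, which requires a strongly regular graph to be neither complete nor empty) a complete graph is not strongly regular, and category \ref{sym 3-class-2} must be read with that convention; otherwise the lemma's ``exactly one'' assertion is false, since the rectangular scheme on an $m\times n$ grid with $m\neq n$ (relations ``same row'', ``same column'', ``rest'') has two clique-union relations \emph{and} a relation with four distinct eigenvalues, hence would lie in both categories. Consequently, in your contrapositive the hypothesis ``not \ref{sym 3-class-2}'' does \emph{not} exclude relations with only two eigenvalues --- and it cannot, because amorphic schemes possess exactly such relations (the $n\times n$ rectangular scheme, or $\mathbb{Z}_2\times\mathbb{Z}_2$ with its three perfect matchings). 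Your proof therefore never establishes amorphicity for any scheme having a clique-union relation; it wrongly discharges those schemes into a case that does not contain them. The repair is small: a disjoint union of $N>1$ equal cliques is itself a (disconnected) strongly regular graph with $\mu=0$, so in the two-eigenvalue case you should conclude ``$\Gamma^{(i)}$ is strongly regular'' and carry it into your final step, which works verbatim for disconnected strongly regular graphs. Separately, your contrapositive only yields ``not amorphic $\Rightarrow$ \ref{sym 3-class-2} or \ref{sym 3-class-3}''; the disjointness of the two categories, needed for ``exactly one'', is never addressed --- it follows from Lemma \ref{sym 3-class second}, since under \ref{sym 3-class-2} the character table \eqref{equ-2.1} forces every relation to have at most three distinct eigenvalues.
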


Let $E_0,E_1,E_2,E_3,E_4$ be the primitive idempotents of $\mathfrak{X}$, and $\tilde{E}_0,\tilde{E}_1,\tilde{E}_2,\tilde{E}_3$ be the primitive idempotents of $\tilde{\mathfrak{X}}$. In Lemma \ref{sym 3-class second}, the shape of the character table of the first class of graphs from Lemma \ref{sym 3-class} is described.

\begin{lemma}\label{sym 3-class second}
{\rm (\cite[Lemma 3.3]{GM})}~Suppose that $\Gamma^{(1)}$ is the disjoint union of $N>1$ $(N\in\mathbb{N}$) connected strongly regular graphs, each one with valency $k$, eigenvalues $r$ and $s$ $(r\neq k, s\neq k)$, and $w$ vertices. Then the character table $\tilde{\mathfrak{X}}$ has the form
\begin{align}\label{equ-2.1}
\tilde{P}=\left(\begin{array}{cccc}
1 & k & w-k-1 & (N-1)w \\
1 & k & w-k-1 & -w\\
1 & r & -1-r & 0\\
1 & s & -1-s & 0
\end{array}\right).
\end{align}
\end{lemma}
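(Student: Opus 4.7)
The plan is to identify the three non-diagonal relations of $\tilde{\mathfrak{X}}$ explicitly, simultaneously diagonalize their adjacency matrices via a natural orthogonal decomposition of $\mathbb{C}^{|X|}$, and then read off $\tilde{P}$ from the resulting eigenvalues. First, since $\Gamma^{(1)}$ is the disjoint union of $N$ connected strongly regular graphs of order $w$, I would identify the three nonzero relations as $\tilde{R}_1=$ adjacency in $\Gamma^{(1)}$, $\tilde{R}_2=$ ``same component, distinct, non-adjacent in $\Gamma^{(1)}$'', and $\tilde{R}_3=$ ``different components''. Counting yields $|X|=Nw$ together with the first row $(1,k,w-k-1,(N-1)w)$ of $\tilde{P}$.

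Next, let $B$ denote the block-diagonal matrix on $X$ having one $J_w$ block per component. The description above gives $B=A_1+A_2+I$ and $J=A_1+A_2+A_3+I$, so $A_2=B-I-A_1$ and $A_3=J-B$. I would then decompose $\mathbb{C}^{|X|}$ into four pairwise orthogonal subspaces: $W_0=\mathrm{span}(\mathbf{1})$ of dimension $1$; $W_1$, the space of vectors constant on each component and orthogonal to $\mathbf{1}$, of dimension $N-1$; $W_r$, the direct sum over components $C$ of the $r$-eigenspaces of the strongly regular graph on $C$ (extended by $0$ off $C$), of dimension $Nm_r$; and $W_s$, defined analogously for $s$, of dimension $Nm_s$. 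Their dimensions sum to $Nw$, so these subspaces exhaust $\mathbb{C}^{|X|}$, and each is simultaneously an eigenspace of $A_1$, $B$, and $J$ by construction, hence also of $A_2$ and $A_3$ via the identities above.

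A short computation of the three eigenvalues on each piece finishes the proof. On $W_0$, $B$ acts as $wI$ and $J$ as $NwI$, yielding $(A_1,A_2,A_3)$-eigenvalues $(k,w-k-1,(N-1)w)$. On $W_1$, one still has $B=wI$ but now $J=0$, giving $(k,w-k-1,-w)$. On $W_r$, both $B$ and $J$ act as $0$ and $A_1$ as $rI$, giving $(r,-1-r,0)$; similarly $W_s$ yields $(s,-1-s,0)$. Since $r,s\neq k$ and $N>1$, the four eigenvalue triples are pairwise distinct, so they correspond to the four primitive idempotents $\tilde{E}_0,\tilde{E}_1,\tilde{E}_2,\tilde{E}_3$, producing the rows of $\tilde{P}$ as in \eqref{equ-2.1}. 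There is no serious obstacle here; the only point needing care is verifying that each of $W_0,W_1,W_r,W_s$ is a genuine common eigenspace of all three adjacency matrices, which is automatic from the identities $A_2=B-I-A_1$ and $A_3=J-B$ once each piece is known to be stable under $A_1$, $B$, and $J$.
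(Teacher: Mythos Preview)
The paper does not supply a proof of this lemma; it is quoted verbatim from \cite[Lemma~3.3]{GM}. Your argument is sound and would furnish a self-contained proof, but one step is asserted rather than justified: you \emph{identify} $\tilde R_2$ and $\tilde R_3$ with ``same component, non-adjacent'' and ``different components''. A priori we are given a $3$-class scheme $\tilde{\mathfrak X}$ in which $\tilde R_1$ is the edge set of $\Gamma^{(1)}$; nothing yet rules out that the remaining two relations partition the non-edges of $\Gamma^{(1)}$ in some other way.

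This is easy to repair. The $k$-eigenspace of $A_1$ is exactly $W_0\oplus W_1$, and the orthogonal projection onto it is $\tfrac{1}{w}B$; hence $B$ is a polynomial in $A_1$ and lies in the Bose--Mesner algebra of $\tilde{\mathfrak X}$. Then $B-I-A_1$ and $J-B$ are $01$-matrices in that algebra, and $I,\,A_1,\,B-I-A_1,\,J-B$ are pairwise disjoint, nonzero (using that each component is a connected strongly regular graph, hence neither complete nor edgeless, and that $N>1$), and sum to $J$. Since the $A_i$ are the unique $01$-basis of the algebra with these properties, $\{A_2,A_3\}=\{B-I-A_1,\,J-B\}$. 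With this identification in hand, your eigenvalue computation on $W_0,W_1,W_r,W_s$ goes through verbatim and yields \eqref{equ-2.1}, possibly after swapping the last two columns and reordering rows.
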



\begin{prop}\label{2.1}
Suppose that $\Gamma^{(i)}$ for some $i\in\{1,2\}$ is the disjoint union of $N>1$ connected strongly regular graphs, each one with valency $k$, eigenvalues $r$ and $s$ $(r\neq k, s\neq k)$, and $w$ vertices. Then $(X,R_3)$ or $(X,R_i\cup R_3)$ has $5$ distinct eigenvalues.
\end{prop}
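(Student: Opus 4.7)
The plan is to combine Lemma~\ref{sym 3-class second} with the Bannai--Muzychuk criterion (Theorem~\ref{Bannai-Muzychuk}) and Theorem~\ref{fission scheme} to pin down the character table $P$ of $\mathfrak{X}$ up to three cases, and then to inspect the spectra of $A_{R_3}$ and $A_{R_i}+A_{R_3}$ in each case. Since $\mathfrak{X}$ has $4$ classes, Theorem~\ref{generates} reduces the problem to exhibiting a $01$-matrix in the Bose--Mesner algebra with exactly $5$ distinct eigenvalues.

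After possibly swapping $\tilde{R}_1$ and $\tilde{R}_2$ so that $\tilde{R}_i$ plays the role of the valency-$k$ column of Lemma~\ref{sym 3-class second}, the character table $\tilde{P}$ of $\tilde{\mathfrak{X}}$ has the shape \eqref{equ-2.1}. Since $\mathfrak{X}$ is a fission of $\tilde{\mathfrak{X}}$ with exactly one additional primitive idempotent, Theorem~\ref{Bannai-Muzychuk} forces exactly one of $\tilde{E}_1,\tilde{E}_2,\tilde{E}_3$ to split; call its index $h$. After a row permutation, Theorem~\ref{fission scheme} then furnishes $P$: the $\tilde{R}_3$-column of $\tilde{P}$ is halved and duplicated into the $R_3$- and $R_4$-columns, except on the two split rows, where a conjugate pair $\rho=(p_{\tilde{R}_3}(h)+\sqrt{a})/2,\ \bar{\rho}$ with $a<0$ appears; the $\tilde{R}_i$- and $\tilde{R}_j$-columns are obtained simply by duplicating the splitting row.

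Next I would run through $h\in\{1,2,3\}$ using the $\tilde{R}_3$-entries read from \eqref{equ-2.1}. For $h=2$ or $h=3$ the splitting entry is $0$, so $\rho=\sqrt{a}/2$ is nonzero and purely imaginary, and the eigenvalues $(N-1)w/2,\ -w/2,\ 0,\ \rho,\ \bar{\rho}$ of $A_{R_3}$ are pairwise distinct because $N>1$, $w\geq 1$, and $\rho\notin\mathbb{R}$. For $h=1$ the splitting entry is $-w$; here $A_{R_3}$ has only four distinct eigenvalues $(N-1)w/2,\rho,\bar{\rho},0,0$, so I pass to $A_{R_i}+A_{R_3}$, whose spectrum is $k+(N-1)w/2,\ k+\rho,\ k+\bar{\rho},\ r,\ s$. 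Using the bounds $k>r\geq 0>s\neq -1$ derived in Subsection~2.2 together with $(N-1)w/2>0$, the three real eigenvalues are pairwise distinct, and the non-real conjugate pair cannot collide with them. Thus in every case at least one of $(X,R_3)$ and $(X,R_i\cup R_3)$ has $5$ distinct eigenvalues, which is what the proposition requires.

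The main (and essentially only) obstacle is case $h=1$: both non-split rows of \eqref{equ-2.1} carry entry $0$ in the $\tilde{R}_3$-column, so $A_{R_3}$ alone fails to separate the corresponding two eigenspaces. Adding $A_{R_i}$, whose eigenvalues on these eigenspaces are the distinct values $r$ and $s$, is precisely what breaks the degeneracy, and this is why the conclusion of the proposition must allow the option $(X,R_i\cup R_3)$ in addition to $(X,R_3)$.
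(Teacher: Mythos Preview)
Your proposal is correct and follows essentially the same route as the paper: apply Lemma~\ref{sym 3-class second} to fix $\tilde P$, use Theorem~\ref{Bannai-Muzychuk} and Theorem~\ref{fission scheme} to determine which $\tilde E_h$ splits, and then read off the spectra of $A_{R_3}$ or $A_{R_i}+A_{R_3}$. The only cosmetic difference is that the paper collapses your cases $h=2$ and $h=3$ into one by noting the $r\leftrightarrow s$ symmetry of rows $2$ and $3$ in \eqref{equ-2.1}, and it justifies distinctness in the $h=1$ case with the slightly weaker facts $k>r,s$ and $r\neq s$ rather than the full ordering $k>r\geq 0>s$ (which, incidentally, presupposes a labeling of $r,s$ not fixed by the hypothesis).
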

\begin{proof}
Without loss of generality, we may assume $i=1$. In view of Lemma \ref{regular} and Theorem \ref{three eigenvalue}, we have $k>r,s$ and $r\neq s$. By Lemma \ref{sym 3-class second}, we may assume that the character table $\tilde{P}$ of $\tilde{\mathfrak{X}}$ has the form as \eqref{equ-2.1}. By replacing the roles of $\tilde{E}_2$ and $\tilde{E}_3$, from Theorem \ref{Bannai-Muzychuk}, we may assume that $\tilde{E}_1=E_1+E_2$ and $\tilde{E}_l=E_{l+1}$ for $l\in\{2,3\}$, or $\tilde{E}_3=E_3+E_4$ and $\tilde{E}_l=E_{l}$ for $l\in\{1,2\}$.

Suppose $\tilde{E_1}=E_1+E_2$ and $\tilde{E}_l=E_{l+1}$ for $l\in\{2,3\}$. By \eqref{equ-2.1} and Theorem \ref{fission scheme}, the character table of $\mathfrak{X}$ has the following form:
\begin{align}
P=\left(\begin{array}{ccccc}
1 & k & w-k-1 & (N-1)w/2 & (N-1)w/2\\
1 & k & w-k-1 & (-w+\sqrt{a})/2 & (-w-\sqrt{a})/2\\
1 & k & w-k-1 & (-w-\sqrt{a})/2 & (-w+\sqrt{a})/2\\
1 & r & -1-r & 0 & 0\\
1 & s & -1-s & 0 & 0
\end{array}\right)\nonumber
\end{align}
with $a<0$. Then the digraph $(X,R_1\cup R_3)$ has eigenvalues $k+(N-1)w/2,(2k-w\pm\sqrt{a})/2,r$, and $s$. Since $k>r,s$ and $r\neq s$, the digraph $(X,R_1\cup R_3)$ has $5$ distinct eigenvalues.

Suppose $\tilde{E}_3=E_3+E_4$ and $\tilde{E}_h=E_{h}$ for $h\in\{1,2\}$. By \eqref{equ-2.1} and Theorem \ref{fission scheme}, the character table of $\mathfrak{X}$ has the following form:
\begin{align}
P=\left(\begin{array}{ccccc}
1 & k & w-k-1 & (N-1)w/2 & (N-1)w/2\\
1 & k & w-k-1 & -w/2 & -w/2\\
1 & r & -1-r & 0 & 0\\
1 & s & -1-s & \sqrt{a} & -\sqrt{a}\\
1 & s & -1-s & -\sqrt{a} & \sqrt{a}
\end{array}\right)\nonumber
\end{align}
with $a<0$. Then the digraph $(X,R_3)$ has eigenvalues $(N-1)w/2,\pm\sqrt{a},-w/2$, and $0$. Since $N,w>1$, the digraph $(X,R_3)$ has $5$ distinct eigenvalues.
\end{proof}

\begin{prop}\label{2.2}
Suppose that $\Gamma^{(3)}$ is the disjoint union of $N>1$ connected strongly regular graphs, each one with valency $k$, eigenvalues $r$ and $s$ $(r\neq k, s\neq k)$, and $w$ vertices. Then $(X,R_i\cup R_3)$ has $5$ distinct eigenvalues for some $i\in\{1,2,3\}$.
\end{prop}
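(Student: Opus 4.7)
The plan is to adapt the strategy of Proposition \ref{2.1} to the situation where $\Gamma^{(3)}$, rather than $\Gamma^{(1)}$ or $\Gamma^{(2)}$, is the disconnected graph. First, I would establish a variant of Lemma \ref{sym 3-class second} tailored to $\Gamma^{(3)}$: since $\tilde A_3$ is the adjacency matrix of the disjoint union of $N$ copies of a connected strongly regular graph with eigenvalues $k, r, s$, the eigenvalue $k$ of $\tilde A_3$ appears with multiplicity $N$. After a permutation of the primitive idempotents via Theorem \ref{Bannai-Muzychuk}, this pins down the character table of $\tilde{\mathfrak{X}}$ into a form analogous to \eqref{equ-2.1} but with the roles of the first and last columns swapped, namely
\[
\tilde P=\left(\begin{array}{cccc}
1 & \ast & \ast & k \\
1 & \ast & \ast & k\\
1 & \ast & \ast & r\\
1 & \ast & \ast & s
\end{array}\right),
\]
with the middle columns determined by Lemma \ref{row}, the total vertex count $Nw$, and the block-constancy of the first two idempotent eigenspaces on the connected components of $\Gamma^{(3)}$.

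Next, the nonsymmetric fission $R_3^{\top}=R_4$ splits a single primitive idempotent $\tilde E_j$ of $\tilde{\mathfrak{X}}$ into two complex-conjugate idempotents of $\mathfrak{X}$, and by Theorem \ref{Bannai-Muzychuk} we have $j\in\{1,2,3\}$. For each of these three subcases, I would write down the full $5\times 5$ character table $P$ of $\mathfrak{X}$ via Theorem \ref{fission scheme}; the split row then acquires entries of the form $(p_3(j)\pm\sqrt{a})/2$ with $a<0$ in the $R_3$ and $R_4$ columns, which are non-real and form a distinct conjugate pair. I would then read off the eigenvalues of $(X, R_i\cup R_3)$ for each $i\in\{1,2,3\}$ directly from $P$ and apply Theorem \ref{generates}.

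The core task is to verify, in each of the three subcases, that at least one $i\in\{1,2,3\}$ makes all five eigenvalues distinct. The main tools for separating eigenvalues are: $k>r,s$ and $r\neq s$ from Lemma \ref{regular} and Theorem \ref{three eigenvalue}; the non-reality of the entries involving $\sqrt{a}$, which automatically separates them from any real entry and from each other; the strict inequalities $N>1$ and $w>1$, which keep the top-row valency strictly larger than all others; and the zero row-sum relation of Lemma \ref{row}, which prevents simultaneous vanishing of the remaining entries.

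The main obstacle I anticipate is the subcase in which the split idempotent is one of $\tilde E_1, \tilde E_2$, since the two rows of $P$ sharing the eigenvalue $k$ on the $\tilde R_3$-column become a source of potential coincidences between real entries in the $R_i\cup R_3$-column for certain choices of $i$. Resolving this cleanly will require choosing $i$ carefully, likely invoking an analog of Lemma \ref{a b} to certify the needed separation, or falling back on $i=3$ (so that $(X, R_3)$ itself has five distinct eigenvalues) exactly as in the second subcase of Proposition \ref{2.1}. When the split idempotent is $\tilde E_3$, the argument is expected to be the more straightforward mirror of the first subcase of Proposition \ref{2.1}, with the $\sqrt{a}$ appearing in the $R_3$-entry of the split row providing two of the five distinct eigenvalues for free.
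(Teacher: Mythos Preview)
Your overall strategy---pin down $\tilde P$ with the $\tilde R_3$-column $(k,k,r,s)^{\top}$, branch on which $\tilde E_j$ splits, and read off eigenvalues---is exactly what the paper does, but your assessment of which subcase is hard is inverted, and this conceals the real work. With your labeling, the split at $\tilde E_1$ is the \emph{easy} case: the eigenvalues of $(X,R_3)$ are $k/2$, $(k\pm\sqrt a)/2$, $r/2$, $s/2$, five distinct values since $k,r,s$ are pairwise distinct and $a<0$. The hard case is the split at $\tilde E_2$ or $\tilde E_3$ (interchangeable via $r\leftrightarrow s$, so the paper handles only one). There rows $0$ and $1$ of $P$ both carry $k/2$ in the $R_3$-column, so $(X,R_3)$ has only four distinct eigenvalues; and because the $\tilde R_1$-column also repeats $w-k-1$ in those same two rows, $(X,R_1\cup R_3)$ has only four as well. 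Neither of your proposed remedies applies: Lemma~\ref{a b} is specific to amorphic symmetrizations, which is precisely the situation excluded here, and ``falling back on $i=3$'' does not separate the repeated $k/2$.

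The only live option in that subcase is $i=2$ (the relation whose column reads $((N-1)w,-w,0,0)^{\top}$), giving the eigenvalues $(N-1)w+k/2$, $k/2-w$, $r/2$, $(s\pm\sqrt a)/2$. The single remaining coincidence to exclude is $k/2-w=r/2$, i.e.\ $w=(k-r)/2$, and this does \emph{not} follow from $k>r,s$, $N,w>1$, or Lemma~\ref{row}. The paper rules it out by a parameter computation on the strongly regular component of $\Gamma^{(3)}$: assuming $w=(k-r)/2$ and combining \eqref{lambda}, \eqref{mu}, \eqref{k2} forces $\mu=2s(r+1)$ with $-1<r<0<s$; Proposition~\ref{integer} then makes the component a conference graph, and solving $2k=(w-1)(\mu-\lambda)$ yields $r=-s(s+3)/(s-1)$, whence $\mu=-2s(s+1)^2/(s-1)<0$, a contradiction. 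This arithmetic is the crux of the proof and is absent from your plan.
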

\begin{proof}
Since $(X,\tilde{R}_3)$ is the disjoint union of connected strongly regular graphs, from Theorem \ref{three eigenvalue}, $k$, $r$, and $s$ are distinct. By Lemma \ref{sym 3-class second}, there exists $i\in\{1,2\}$ such that the valency of $\Gamma^{(i)}$ is not $w-k-1$. Without loss of generality, we may assume $i=2$. By Lemma \ref{sym 3-class second}, the character table of $\tilde{\mathfrak{X}}$ has the form
\begin{align}\label{equ-2.2}
\tilde{P} =\left(\begin{array}{cccc}
1 & w-k-1 & (N-1)w & k\\
1 & w-k-1 & -w & k\\
1 & -1-r & 0 & r\\
1 & -1-s & 0 & s
\end{array}\right).
\end{align}
By replacing the roles of $\tilde{E}_2$ and $\tilde{E}_3$, from Theorem \ref{Bannai-Muzychuk}, we may assume that $\tilde{E_1}=E_1+E_2$ and $\tilde{E}_h=E_{h+1}$ for $h\in\{2,3\}$, or $\tilde{E}_3=E_3+E_4$ and $\tilde{E}_h=E_{h}$ for $h\in\{1,2\}$.

\textbf{Case 1.} $\tilde{E_1}=E_1+E_2$ and $\tilde{E}_h=E_{h+1}$ for $h\in\{2,3\}$.

By \eqref{equ-2.2} and Theorem \ref{fission scheme}, the character table of $\mathfrak{X}$ has the following form:
\begin{align}
P=\left(\begin{array}{ccccc}
1 & w-k-1 & (N-1)w & k/2 & k/2\\
1 & w-k-1 & -w & (k+\sqrt{a})/2 & (k-\sqrt{a})/2\\
1 & w-k-1 & -w & (k-\sqrt{a})/2 & (k+\sqrt{a})/2\\
1 & -1-r & 0 & r/2 & r/2\\
1 & -1-s & 0 & s/2 & s/2
\end{array}\right)\nonumber
\end{align}
with $a<0$. Then the digraph $(X,R_3)$ has eigenvalues $k/2,(k\pm\sqrt{a})/2,r/2$, and $s/2$. Since $k,r$, and $s$ are distinct, the digraph $(X,R_3)$ has $5$ distinct eigenvalues.

\textbf{Case 2.} $\tilde{E}_3=E_3+E_4$ and $\tilde{E}_h=E_{h}$ for $h\in\{1,2\}$.

By \eqref{equ-2.2} and Theorem \ref{fission scheme}, the character table of $\mathfrak{X}$ has the following form:
\begin{align}
P=\left(\begin{array}{ccccc}
1 & w-k-1 & (N-1)w & k/2 & k/2\\
1 & w-k-1 & -w & k/2 & k/2\\
1 & -1-r & 0 & r/2 & r/2\\
1 & -1-s & 0 & (s+\sqrt{a})/2 & (s-\sqrt{a})/2\\
1 & -1-s & 0 & (s-\sqrt{a})/2 & (s+\sqrt{a})/2
\end{array}\right)\nonumber
\end{align}
with $a<0$. Then the digraph $(X,R_2\cup R_3)$ has eigenvalues $k/2+(N-1)w,(s\pm\sqrt{a})/2,k/2-w$, and $r/2$. To prove that $(X,R_2\cup R_3)$ has $5$ distinct eigenvalues, it suffices to show that $w\neq(k-r)/2$ since $r<k$ from Lemma \ref{regular}.

Assume the contrary, namely, $w=(k-r)/2$. Then $r<0$ as $w>k$. Since $(X,\tilde{R}_3)$ is the disjoint union of connected strongly regular
graphs on $w$ vertices with the parameters $(w,k,\lambda,\mu)$, one obtains $\mu>0$. By \eqref{lambda} and \eqref{mu}, we have $\lambda=\mu+r+s$ and $k=\mu-rs$. In view of \eqref{k2}, one gets
\begin{align}\label{w}
w=\frac{(\mu-rs-s)(\mu-rs-r)}{\mu}.
\end{align}
By substituting
$w=(k-r)/2=(\mu-rs-r)/2$ into \eqref{w}, we obtain
\begin{align}
\frac{(\mu-rs-r)(\mu-2rs-2s)}{\mu}=0.\nonumber
\end{align}
Since $w>0$, from \eqref{w}, one has $\mu=2s(r+1)$. By Subsection 2.2, we have $k> s\geq0>r$. The fact $\mu>0$ implies $-1<r<0<s$. By Proposition \ref{integer}, $(X,\tilde{R}_3)$ is a conference graph. Then, given $m_1=m_2$, \eqref{mulitiplicity} implies $2k=(w-1)(\mu-\lambda)$. By substituting $w,k,\lambda,\mu$ into the latter equation, we get $(r+2)(rs+s^2-r+3s)=0$. Since $-1<r<0<s$, we have $r=-\frac{s(s+3)}{s-1}$, and so $s>1$. Then
$\mu=2s(r+1)=-2s(s+1)^2/(s-1)<0,$ a contradiction.
\end{proof}

\begin{prop}\label{case 3}
Suppose that $\Gamma^{(i)}$ has $4$ distinct eigenvalues for some $i\in\{1,2,3\}$. Then $(X,R_j\cup R_3)$ has $5$ distinct eigenvalues for some $j\in\{1,2,3\}$.
\end{prop}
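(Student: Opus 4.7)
The plan is to split the argument by the value of $i$: for $i=3$ a direct appeal to Theorem \ref{one pair} suffices, whereas for $i\in\{1,2\}$ I will read the spectrum of a candidate digraph off the explicit character table of $\mathfrak{X}$.

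If $i=3$, the hypothesis says $(X,R_3\cup R_4)=\Gamma^{(3)}$ has $d+1=4$ distinct eigenvalues in the $3$-class scheme $\tilde{\mathfrak{X}}$, so by Theorem \ref{generates} this graph generates $\tilde{\mathfrak{X}}$. Theorem \ref{one pair} then yields immediately that $(X,R_3)$ has exactly $5$ distinct eigenvalues, and I take $j=3$ (interpreting $R_3\cup R_3=R_3$).

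Now suppose $i\in\{1,2\}$; by relabelling I may assume $i=1$. By Theorem \ref{Bannai-Muzychuk} applied to the fusion $\mathfrak{X}\to\tilde{\mathfrak{X}}$, exactly one of $\tilde{E}_1,\tilde{E}_2,\tilde{E}_3$ splits as $E_l+E_{l+1}$ in $\mathfrak{X}$; call this index $h$ and set $\{h_1,h_2\}=\{1,2,3\}\setminus\{h\}$. After re-indexing rows of $\tilde{P}$, Theorem \ref{fission scheme} gives the character table of $\mathfrak{X}$ explicitly, and from it the spectrum of $(X,R_1\cup R_3)$ reads off as the Perron root $\tilde{k}_1+k_3$, a nonreal complex-conjugate pair on $V_l,V_{l+1}$, and the two real values $\theta_{h_1}+\xi_{h_1}/2$ and $\theta_{h_2}+\xi_{h_2}/2$. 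The conjugate pair is automatically distinct from each other and from the real values, and since $\Gamma^{(1)}$ has four distinct eigenvalues, Lemma \ref{regular} gives $\tilde{k}_1>\theta_m$ strictly for $m\geq 1$; combined with $\xi_{h_a}\leq\tilde{k}_3=2k_3$ this forces the Perron root to strictly dominate the two remaining real values. So the sole obstruction to $5$ distinct eigenvalues is the coincidence $\theta_{h_1}+\xi_{h_1}/2=\theta_{h_2}+\xi_{h_2}/2$. If this fails, I take $j=1$. If it holds, $\theta_{h_1}\neq\theta_{h_2}$ forces $\xi_{h_1}\neq\xi_{h_2}$, and then examining $(X,R_3)$ I find its spectrum $k_3,\rho,\bar{\rho},\xi_{h_1}/2,\xi_{h_2}/2$ is already $5$ distinct numbers, so I take $j=3$.

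The step I expect to require the most care is verifying the strict inequality $k_3>\xi_{h_a}/2$ that closes the $j=3$ branch: it relies on $(X,\tilde{R}_3)$ being connected, so that $\tilde{k}_3$ has multiplicity one as its eigenvalue by Proposition \ref{multiplicity}. Since Proposition \ref{case 3} is invoked only in case (ii) of Lemma \ref{sym 3-class}, no $\Gamma^{(m)}$ is a disjoint union of strongly regular graphs with the same parameters, and in particular $(X,\tilde{R}_3)$ is connected; this last structural input is precisely what makes the final strict inequality go through and closes the argument.
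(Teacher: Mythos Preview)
Your reduction for $i=3$ via Theorem~\ref{one pair} is correct and matches the paper. Your spectral bookkeeping for $i\in\{1,2\}$ is also sound up to the very last step, where you need the strict inequality $k_3>\xi_{h_a}/2$ to separate the Perron value of $(X,R_3)$ from the other real eigenvalues.

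The gap is in your justification of that inequality. You argue that being in case~\ref{sym 3-class-3} of Lemma~\ref{sym 3-class} rules out any $\Gamma^{(m)}$ being a disjoint union of strongly regular graphs, and conclude that $\Gamma^{(3)}$ is connected. But that implication fails: $\Gamma^{(3)}$ could still be a disjoint union of \emph{complete} graphs, which are not strongly regular under the convention ``neither complete nor empty'' adopted in the paper, and hence are not excluded by case~\ref{sym 3-class-2}. In that situation $\Gamma^{(3)}$ is disconnected, the eigenvalue $\tilde{k}_3$ has multiplicity greater than one, and one of the $\xi_{h_a}$ equals $\tilde{k}_3=2k_3$; your strict inequality collapses to equality and $(X,R_3)$ has only four distinct eigenvalues.

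This is not a hypothetical obstruction: the paper's proof lands in precisely this configuration (with $\Gamma^{(3)}$ a disjoint union of $N>1$ copies of $K_{2k_3+1}$, forcing $t=-1$) and then spends the bulk of its effort there. It invokes intersection-number identities from \cite{GLC06} to pin down $u=0$ and $r=-1$, uses the failure of $(X,R_1\cup R_3)$ to have five eigenvalues to obtain $w=v+k_3+1/2$, and finally passes to the $2$-class fusion scheme coming from the $K_{2k_3+1}$-blocks to compute multiplicities and reach the contradiction $w=0=u$. None of this is captured by your connectedness claim, and something of comparable strength is needed to close the argument. (As a separate, smaller point: importing an extra hypothesis from how the proposition is later invoked means you are not proving the proposition as stated.)
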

\begin{proof}
By Theorems \ref{one pair} and \ref{generates}, we only need to consider the case that $\Gamma^{(3)}$ does not have $4$ distinct eigenvalues. Without loss of generality, we may assume that $\Gamma^{(1)}$ has $4$ distinct eigenvalues. It follows from Lemma \ref{row} that the character table of $\tilde{\mathfrak{X}}$ has the following form:
\begin{align}\label{equ-3}
\tilde{P}=\left(\begin{array}{cccc}
1 & k_1 & k_2 & 2k_3 \\
1 & u & -1-r-u & r\\
1 & v & -1-s-v & s\\
1 & w & -1-t-w & t
\end{array}\right),
\end{align}
where $k_1,u,v,w$ are distinct and $|\{2k_3,r,s,t\}|<4$. Since $(X,\tilde{R}_1)$ (resp. $(X,\tilde{R}_3)$) is a graph of valency $k_1$ with eigenvalues $k_1,u,v,w$ (resp. valency $2k_3$ with eigenvalues $2k_3,r,s,t$) from \eqref{equ-3}, we have $k_1>u,v,w$ and $2k_3\geq r,s,t$ by Lemma \ref{regular}.

By Theorem \ref{Bannai-Muzychuk}, we may assume $\tilde{E}_1=E_1+E_2$, $\tilde{E}_2=E_3$, and $\tilde{E}_3=E_4$. In view of \eqref{equ-3} and Theorem \ref{fission scheme}, the character table of $\tilde{X}$ has the following form:
\begin{align}\label{equ-3-1}
P=\left(\begin{array}{ccccc}
1 & k_1 & k_2 & k_3 & k_3 \\
1 & u & -1-r-u & (r+\sqrt{a})/2 & (r-\sqrt{a})/2\\
1 & u & -1-r-u & (r-\sqrt{a})/2 & (r+\sqrt{a})/2\\
1 & v & -1-s-v & s/2 & s/2\\
1 & w & -1-t-w & t/2 & t/2
\end{array}\right)
\end{align}
with $a<0$.

Assume the contrary, namely, none of the digraphs $(X,R_3)$, $(X,R_1\cup R_3)$, and $(X,R_2\cup R_3)$ have $5$ distinct eigenvalues. Suppose $s=t$. By \eqref{equ-3-1}, $(X,R_1\cup R_3)$ has eigenvalues $k_1+k_3,(2u+r\pm\sqrt{a})/2,v+s/2$, and $w+s/2$. The fact $w\neq v$ implies $k_1+k_3\in\{v+s/2,w+s/2\}$, contrary to the fact that $k_1>v,w$ and $2k_3\geq s$. Then $s\neq t$.

By \eqref{equ-3-1}, $(X,R_3)$ has eigenvalues $k_3,(r\pm\sqrt{a})/2,s/2$, and $t/2$. Since $(X,R_3)$ has less than $5$ distinct eigenvalues, we have $2k_3\in\{t,s\}$. Without loss of generality, we may assume $s=2k_3$. Then, by Proposition \ref{multiplicity}, $\Gamma^{(3)}$ is not connected. Since all eigenvalues of $\Gamma^{(3)}$ are $2k_3,r$, and $t$ from \eqref{equ-3}, by Lemma \ref{diameter} and Theorem \ref{three eigenvalue}, $\Gamma^{(3)}$ is the disjoint union of complete graphs with the same size, or the disjoint union of connected strongly regular graphs with the same parameters. By Proposition \ref{2.2}, $(X,\tilde{R}_3)$ is the disjoint union of $N>1$ complete graphs $K_{2k_3+1}$. Since $t\neq 2k_3$, from Theorem \ref{three eigenvalue}, we have $r\in\{2k_3,t\}$. In view of Theorem \ref{disconnected}, $\Gamma^{(3)}$
is a disconnected strongly regular graph, and Proposition \ref{disconnected eigen} implies $t=-1$.

To distinguish the notations between $\mathfrak{X}$ and $\tilde{\mathfrak{X}}$, let $p_{i,j}^{h}$ denote the intersection numbers belonging to $\mathfrak{X}$ with $0\leq i,j,h\leq4$, and $\tilde{p}_{i',j'}^{h'},\tilde{m}_{i'}$ the parameters belonging to $\tilde{\mathfrak{X}}$ with $0\leq i',j',h'\leq 3$. By \cite[Theorem 2.6 (iii)]{GLC06}, one gets $p_{3,1}^3=(\tilde{p}_{3,1}^3+u)/2$ and $p_{3,2}^3=(\tilde{p}_{3,2}^3-1-r-u)/2$. Since $(X,\tilde{R}_3)$ is the disjoint union of $N>1$ complete graphs $K_{2k_3+1}$, we have $p_{3,l}^3=\tilde{p}_{3,l}^3=0$ for $l\in\{1,2\}$. It follows that $u=0$ and $r=-1$.

By \eqref{equ-3-1}, $(X,R_1\cup R_3)$ has eigenvalues $k_1+k_3,(-1\pm\sqrt{a})/2,v+k_3$, and $w-1/2$. Since $v,w<k_1$ and $(X,R_1\cup R_3)$ has less than $5$ distinct eigenvalues, one has $w=v+k_3+1/2$, which implies \begin{align}\label{equ-3'}
\tilde{P}=\left(\begin{array}{cccc}
1 & k_1 & k_2 & 2k_3 \\
1 & 0 & 0 & -1\\
1 & v & -1-v-2k_3 & 2k_3\\
1 & 1/2+v+k_3 & -1/2-v-k_3 & -1
\end{array}\right).
\end{align}

Since $\Gamma^{(3)}$ is the disjoint union of $N>1$ complete graphs $K_{2k_3+1}$, the configuration $\hat{\mathfrak{X}}:=(X,\{\hat{R}_0,\hat{R}_1,\hat{R}_2\})$ is a symmetric $2$-class association scheme with $\hat{R}_1=\tilde{R}_1\cup\tilde{R}_2$ and $\hat{R}_2=\tilde{R}_3$. Let $\hat{E}_0,\hat{E}_1,\hat{E}_2$ be the primitive idempotents and $\hat{m}_0,\hat{m}_1,\hat{m}_2$ be multiplicities of $\hat{\mathfrak{X}}$. Since the strongly regular graph $\Gamma^{(3)}$ is not connected, from Lemma \ref{row} and Propositions \ref{multiplicity}, \ref{disconnected eigen}, the character table $\hat{\mathfrak{X}}$ has the form:
\begin{align}
\hat{P}=\left(\begin{array}{cccc}
1 & (N-1)(2k_3+1) & 2k_3 \\
1 & 0 & -1\\
1 & -1-2k_3 & 2k_3
\end{array}\right)\begin{array}{c} 1 \\ \hat{m}_1=2Nk_3 \\ \hat{m}_2=N-1  \end{array},\nonumber
\end{align}
where $k_1+k_2=(N-1)(2k_3+1)$. By Theorem \ref{Bannai-Muzychuk} and \eqref{equ-3'}, one has $\tilde{E}_1+\tilde{E}_3=\hat{E}_1$ and $\tilde{E}_2=\hat{E}_2$, which imply $\tilde{m}_1+\tilde{m}_3=\hat{m}_1=2Nk_3$ and $\tilde{m}_2=\hat{m}_2=N-1$. In view of \cite[(2.1)]{EB93}, we get $v=-(2k_3+1)k_1/(k_1+k_2)=-k_1/(N-1)$. Since the trace of the adjacency matrix of $\tilde{R}_1$ is $0$, one obtains
\begin{align}
k_1+v\tilde{m}_2+(v+k_3+1/2)\tilde{m}_3=0.\nonumber
\end{align}
Since $v=-k_1/(N-1)$ and $\tilde{m}_2=N-1$, one gets $(v+k_3+1/2)\tilde{m}_3=0$.
Since $\tilde{m}_3>0$, we have $v=-k_3-1/2$, and so $w=v+k_3+1/2=0$, contrary to the fact $w\neq u$.
\end{proof}

\begin{proof}[Proof of Theorem \ref{main}]
\ref{main1-1} Suppose that $\mathfrak{X}$ is skew-symmetric. By Theorem \ref{t:rs20} and Propositions \ref{skew-1}, \ref{skew-2}, $(X,R_{i}\cup R_3)$ has $5$ distinct eigenvalues for some $i\in\{1,3\}$. Suppose that $\mathfrak{X}$ is not skew-symmetric. It follows that $\mathfrak{X}$ has exactly one pair nonsymmetric relations. Then the symmetrization of $\mathfrak{X}$ is a $3$-class association scheme. In view of Theorem \ref{main-general}, we only need to consider the case that the symmetrization of $\mathfrak{X}$ is not amorphic. By Lemma \ref{sym 3-class} and Propositions \ref{2.1}--\ref{case 3}, $(X,R_{i}\cup R_3)$ has $5$ distinct eigenvalues for some $i\in\{1,2,3\}$. Thus, (i) is valid.

\ref{main1-2} is immediate from \ref{main1-1} and Theorem \ref{generates}.
\end{proof}

\section*{Acknowledgements}

The author would like to thank the anonymous reviewer for careful reading of
the manuscript of the paper and invaluable suggestions. Y.~Yang is supported by NSFC (12101575, 52377162).

\section*{Data Availability Statement}

No data was used for the research described in the article.

\end{document}